\definecolor{darkblue}{rgb}{0.0,0,0.7} 
\definecolor{darkred}{rgb}{0.7,0,0} 
\definecolor{darkgreen}{rgb}{0, .6, 0} 
\newtheorem{theorem}{Theorem}[section]
\newtheorem{proposition}[theorem]{Proposition}
\newtheorem{lemma}[theorem]{Lemma}
\newtheorem{corollary}[theorem]{Corollary}
\newtheorem{conjecture}[theorem]{Conjecture}
\theoremstyle{definition}
\newtheorem{definition}[theorem]{Definition}
\newtheorem{remark}[theorem]{Remark}
\newtheorem{example}[theorem]{Example}
\numberwithin{equation}{section}
\newcommand{\onep}{1'}
\newcommand{\twop}{2'}
\newcommand{\threep}{3'}
\newcommand{\oneb}{$-$1}
\newcommand{\twob}{$-$2}
\newcommand{\threeb}{$-$3}
\newcommand{\fourb}{$-$4}
\newcommand{\sevb}{$-$7}
\newcommand{\si}{s_i}
\newcommand{\twor}{\color{darkred}{2}}
\newcommand{\twobl}{\color{darkblue}{2}}
\newcommand{\threepg}{\color{darkgreen}{3'}}
\newcommand{\defn}[1]{{\color{darkred}\emph{#1}}}
\title{Crystal Structures for Double Stanley Symmetric Functions}
\author[G.~Hawkes]{Graham Hawkes}
\address[G.~Hawkes]{Department of Mathematics, University of California, Davis, One Shields Avenue, Davis, CA 95616, USA}
\curraddr[G.~Hawkes]{Max-Planck-Institut f\"ur Mathematik, Vivatsgasse 7, 53111 Bonn, Germany}
\email{hawkes@math.ucdavis.edu}
\urladdr{https://www.math.ucdavis.edu/~hawkes/}
\thanks{The author acknowledges support from the Max-Planck-Institut f\"ur Mathematik. }
\keywords{Stanley symmetric functions, Littlewood-Richardson Rules, Schur and P-Schur Functions}
\begin{document}

\begin{abstract}
We relate the combinatorial definitions of the type $A_n$ and type $C_n$ Stanley symmetric functions, via a combinatorially defined ``double Stanley symmetric function," which gives the type $A$ case at $(\mathbf{x},\mathbf{0})$ and gives the type $C$ case at $(\mathbf{x},\mathbf{x})$.  We induce a  type $A$ bicrystal structure on the underlying combinatorial objects of this function which has previously been done in the type $A$ and type $C$ cases.  Next we prove a few statements about the algebraic relationship of these three Stanley symmetric functions.
We conclude with some conjectures about what happens when we generalize our constructions to type $C$.

\end{abstract}

\maketitle

\section{Introduction and Notation}

In this paper we will relate the combinatorial definitions of the type $A_n$  (\cite{Stanley.1984}) and type $C_{n+1}$ \cite{Billey.Haiman.1995}, \cite{Fomin.Kirillov.1996} Stanley symmetric functions.  To do this, we define combinatorially a ``double Stanley symmetric function" and show that it is indeed a symmetric function in two sets of infinite variables.  Precisely, our double Stanley symmetric function gives the type $A$ Stanley symmetric function at $(\mathbf{x},\mathbf{0})$ and gives the type $C$ Stanley symmetric function at $(\mathbf{x},\mathbf{x})$.

 Both the type $A$ and type $C$ functions are Schur positive, and a crystal-theoretic interpretation of these facts has been given in \cite{Morse.Schilling.2016} and \cite{Hawkes.Paramonov.Schilling.2017} respectively. Furthermore, crystal analysis for the (type A) stable limit of double Schubert polynomials is carried out \cite{Lenart.2004} by considering a crystal structure on the underlying combinatorial objects of rc graphs (in other literature known as pipe dreams). In the paper we will carry out this procedure for our new double Stanley symmetric function by considering a crystal sructure for the underlying objects of reduced signed increasing factorizations.   To do this we first write the double Stanley symmetric functions as a sum of characters of certain tableaux (section 2). Next, we introduce explicit crystal operators on these tableaux, which allows us to write the  double Stanley symmetric function as sum of products of Schur functions (section 3). Then, we introduce a notion of conversion which helps us explore the algebraic relationship of all three of these Stanley symmetric functions, in particular, recovering some results of Lam \cite{Lam.1995} (section 4). Although sections 2-4 are restricted to the type $A$ case, we conclude (section 5) with a quick survey of some results (without proof) and conjectures about the type $C$ case.\\

Throughout the paper, when some $k \in \mathbb{N}$ is specified $x$ will refer to the list of variables $(x_1, \ldots, x_k)$ and $y$ will refer to the list of variables $(y_1, \ldots, y_k)$.  On the other hand  $\mathbf{x}$ will refer to the infinite list of variables $(x_1, x_2, \ldots)$ and $\mathbf{y}$ will refer to the infinite list of variables $(y_1, y_2,\ldots)$.  If the polynomial $P(x)$ or $P(x,y)$ is defined for arbitrary $k$ then $P(\mathbf{x})$ or, respectively, $P(\mathbf{x},\mathbf{y})$ will represent the corresponding function obtained by letting $k \rightarrow \infty$.\\

The $A_n$ Coxeter system is defined as the Coxeter system with generators, $s_1, \ldots, s_n$ and relations $(s_is_j)^{m_{ij}}=1$ where $m_{ij}$ is an integer determined as follows:

\begin{itemize}
\item{If $|i-j|=0$,  $m_{ij}=1$}.
\item{If $|i-j|=1$, $m_{ij}=3$}.
\item{If $|i-j|>1$, $m_{ij}=2$}.
\end{itemize}

By abuse of notation, we will also refer to the corresponding Coxeter group of size $(n+1)!$ as $A_n$.  The $C_{n+1}$ Coxeter system is defined as the Coxeter system with generators, $s_0, s_1, \ldots, s_n$ and relations $(s_is_j)^{m_{ij}}=1$ where $m_{ij}$ is an integer determined as follows:

\begin{itemize}
\item{If $|i-j|=0$,  $m_{ij}=1$}.
\item{If $i>0$ and $j>0$, and $|i-j|=1$, $m_{ij}=3$}.
\item{If $i=0$ or $j=0,$ and $|i-j|=1$, $m_{ij}=4$}.
\item{If $|i-j|>1$, $m_{ij}=2$}.
\end{itemize}

Similarly, we will sometimes refer to the corresponding group of size $2^{n+1} (n+1)!$ itself as $C_{n+1}$.  Given the relations above one can define two types of symmetric functions, indexed, respectively, by elements of $A_n$ and $C_{n+1}$.\\

First, suppose $\omega \in A_n$.  A \emph{reduced word} for $\omega$ is an expression, $u$, for $\omega$ using the generators  $s_1, \ldots, s_n$ such that no other such expression for $\omega$ is shorter than $u$.  Given a fixed $k$, a \defn{reduced increasing factorization} ($RIF_k$), $v$, for $\omega$ is a reduced word $u$, for $\omega$ along with a subdivision of $u$ into $k$ parts such that each part is increasing under the order $s_1 < \cdots <s_n$.   The \emph{weight} of $v$ is the vector whose $i^{th}$ entry records the number of generators in the $i^{th}$ subdivision of $v$. The type $A$ Stanley symmetric polynomial \cite{Stanley.1984} in $k$ variables for $\omega$ is:

\begin{eqnarray*}
F_{\omega}^A(x)=\sum_{v \in RIF_k({\omega})} x ^ {wt(v)},
\end{eqnarray*}
where $RIF_k({\omega})$ is the set of reduced increasing factorizations of $\omega$, and $wt(v)$ is the weight of $v$.  Letting $k \rightarrow \infty$ in the  type $A$ Stanley symmetric polynomial gives the type $A$ Stanley symmetric function for $\omega$. \\

Now suppose $\omega \in C_{n+1}$.  A \emph{reduced word} for $\omega$ is an expression, $u$, for $\omega$ using the generators  $s_0, s_1, \ldots, s_n$ such that no other such expression for $\omega$ is shorter than $u$.  Given a fixed $k$, a \defn{reduced unimodal factorization} $(RUF_k)$, $v$, for $\omega$ is a reduced word $u$, for $\omega$ along with a subdivision of $u$ into $k$ parts such that each part is unimodal (i.e., decreasing and then increasing) under the order $s_0<s_1 < \cdots <s_n$.  The \emph{weight} of $v$ is the vector whose $i^{th}$ entry records the number of generators in the $i^{th}$ subdivision of $v$. The type $C$ Stanley symmetric polynomial \cite{Billey.Haiman.1995}, \cite{Fomin.Kirillov.1996}, in $k$ variables for $\omega$ is:

\begin{eqnarray*}
F_{\omega}^C(x)=\sum_{v \in RUF_k({\omega})} 2^{ne(v)} x ^ {wt(v)},
\end{eqnarray*}
where $ne(v)$ is the number of nonempty subdivisions of $v$, $RUF_k({\omega})$ is the set of reduced unimodal factorizations of $\omega$, and $wt(v)$ is the weight of $v$. Letting $k \rightarrow \infty$ in the  type $C$ Stanley symmetric polynomial gives the type $C$ Stanley symmetric function for $\omega$. \\

Next consider the generators $s_{-n},\ldots s_{-1},s_0, s_{1},\ldots, s_{n}$ and impose relations $(s_is_j)^{m_{ij}}=1$ where $m_{ij}$ is an integer determined as follows:

\begin{itemize}
\item{If $|(|i|-|j|)|=0$,  $m_{ij}=1$}.
\item{If $i \neq 0$ and $j \neq 0$, and $|(|i|-|j|)|=1$, $m_{ij}=3$}.
\item{If $|(|i|-|j|)|>1$, $m_{ij}=2$}.
\item{If $i=0$ or $j=0,$ and $|(|i|-|j|)|=1$, $m_{ij}=4$}.
\end{itemize}
Of course, the resulting system is not Coxeter, for instance, the relations imply that $s_{-i}=s_i$ holds,   \footnote{This makes sense on the level of Weyl groups: the reflection over the plane perpendicular to the $i^{th}$ simple root is equal to the reflection over the plane perpendicular to the opposite of the $i^{th}$ simple root.} so the generating set is obviously not minimal.   \\

In this setting, a \emph{reduced word} for $\omega$ is an expression, $u$, for $\omega$ using the  generators $s_{-n},\ldots, s_{-1}, s_0, s_1, \ldots, s_n$ such that no other such expression for $\omega$ is shorter than $u$.  Given a fixed $k$, a \defn{reduced signed increasing factorization} ($RSIF_k)$, $v$, for $\omega$ is a reduced word $u$, for $\omega$ along with a subdivision of $u$ into $k$ parts such that each part is increasing under the order $s_{-n}<\cdots s_{-1} < s_0 < s_1 < \cdots <s_n$.\\

The \emph{double weight} of $v$, denoted $(dw(v,1),dw(v,2))$ is the pair $(X,Y)$, where the $i^{th}$ entry of $X$ records the number of generators with negative index in the $i^{th}$ subdivision of $v$, and  the $i^{th}$ entry of $Y$ records the number of generators with nonnegative index in the $i^{th}$ subdivision of $v$.   For instance, $v=(s_{-3}s_{-2}s_{1})(s_{-5}s_2s_3)(s_{-4}s_{-3})$ is an $RSIF$ (with $k=3$) for $\omega=s_3s_2s_1s_2s_3s_5s_4s_3$ with double weight $((2,1,2),(1,2,0))$. We define the \emph{double Stanley symmetric polynomial} in $k$ variables for $\omega \in C_{n+1}$ to be:

\begin{eqnarray*}
F_{\omega}^d(x,y)=\sum_{v \in RSIF_k({\omega})} x ^ {dw(v,1)} y ^ {dw(v,2)},
\end{eqnarray*}
where $RSIF_k({\omega})$ is the set of reduced signed increasing factorizations of $\omega$ into $k$ parts.  Letting $k \rightarrow \infty$ in the  double Stanley symmetric polynomial gives the double Stanley symmetric function for $\omega$. We will frequently use the shorthand $i$ for $s_i$ and $\bar{i}$ for $s_{-i}$ when it is clear we are discussing expressions of Coxeter elements. For instance, $v$ above may be rewritten: $v=(\bar{3}\bar{2}1)(\bar{5}23)(\bar{4}\bar{3})$.

In sections 2-4 we consider the special case where $\omega \in A_n$.  In section 5 we give a short overview of what happens when $\omega \in C_{n+1}$ (in general the double Stanley ``symmetric" function may not be symmetric if $\omega \notin A_n$).  The fact that $F_{\omega}^d(\mathbf{x},\mathbf{y})$ \textit{is} symmetric for $\omega \in A_n$ is not obvious and will require some effort to show. For now, we simply note some equalities that do follow immediately from the constructions: For any $\omega \in A_n \subseteq C_{n+1}$ we can define all three Stanley symmetric functions mentioned, and we have:  $F_{\omega}^d(\mathbf{0},\mathbf{x})= F_{\omega}^A(\mathbf{x})$  and $F_{\omega}^d(\mathbf{x},\mathbf{x})= F_{\omega}^C(\mathbf{x})$  and $F_{\omega}^d(\mathbf{x},\mathbf{y})=F_{\omega^{-1}}^d(\mathbf{y},\mathbf{x})$.

\section{Expansion in Terms of Primed Tableaux}

In this section we expand the function $F_{\omega}^d$ for $\omega \in A_n$ in terms of a certain generating function for primed tableaux. Now, we fix some $k \in \mathbb{N}$ for the remainder of this section. We will work over the alphabet $\bar{X}_k'=\{\bar{k}<\cdots < \bar{2} < \bar{1} < 1'< 1 < 2'< 2 < \cdots < k' < k\}$ (these elements are not related to generators $s_i$ or $s_{-i}$.  This association is only made when numbers appear within parenthesis inside a factorization or within an Edelman-Greene tableau (defined later)). An element in this alphabet is called \defn{marked} if it is barred or it is primed, and called \defn{unmarked} otherwise. The subset of $\bar{X}_k'$ which contains no primed letters is denote $\bar{X}_k$.  The subset of $\bar{X}_k'$ which contains no barred letters is denoted $X_k'$.  Inside tableaux, barred entries will be represented using a small -, for example  $\young(\fourb\threeb\oneb22)$ represents the one row tableau with entries $\{\bar{4}, \bar{3}, \bar{1}, 2, 2\}$ because moving the bar in front of the number makes it easier to see. 

The definition of primed tableau (given below)  appears as a specific case of a more general definition. (This more general definition will be needed later.)

\begin{definition}

 Fix partitions $\mu \subseteq \lambda$. Fix vectors $X$ and $Y$ in $\mathbb{Z}_{\geq 0}^k$. Finally, fix  $0 \leq j \leq k$. We define the set of \defn{primed signed tableaux} corresponding to these parameters, by declaring that  $T \in PST(\lambda/\mu,X,Y,j)$ if:

\begin{enumerate}
\item{$T$ has shape $\lambda/\mu$.}
\item{$T$ has entries from $\bar{X}_k'$.}
\item{The rows and columns of $T$ are weakly increasing.}
\item{Each row of $T$ has at most one marked $i$ and each column has at most one unmarked $i$.}
\item{$T$ contains $Y(i)$ unmarked $i$s.}
\item{$T$ contains $X(i)$ barred $i$s (and no primed $i$s) for each $i > j$.}
\item{$T$ contains $X(i)$ primed $i$s (and no barred $i$s) for each $i \leq j$.}
\end{enumerate}

\end{definition}

\begin{example}
Let $\lambda=(4,3,2,2)$. The following lies in $PST$\Bigg(
$\lambda/\emptyset
$,
$\begin{bmatrix}
1\\
2\\
2\\
2\\
\end{bmatrix}$,
$\begin{bmatrix}
2\\
2\\
0\\
1\\
\end{bmatrix}$,
2 \Bigg):
$\young(\fourb\threeb14,\fourb\onep\twop,\threeb1\twop,22) $  \\

\end{example}

\begin{definition}
Let $X, Y \in \mathbb{Z}_{\geq 0}^k$. A  \defn{primed tableau} of shape $\lambda/\mu$ and double weight $(X,Y)$ is an element of  $PST(\lambda/\mu,X,Y,k)$.
\end{definition}
\begin{definition}
Let $X, Y \in \mathbb{Z}_{\geq 0}^k$. A  \defn{signed tableau} of shape $\lambda/\mu$ and double weight $(X,Y)$ is an element of  $PST(\lambda/\mu,X,Y,0)$.
\end{definition}

In other words, a primed tableau is a primed signed tableau in $X_k'$ and a signed tableau is a primed signed tableau in $\bar{X}_k$.  For shorthand, we also refer to the set of primed tableaux, $PST(\lambda/\mu,X,Y,k)$, as $PT_k(\lambda/\mu)$ with double weight $(X,Y)$.  We now define a polynomial in the variables $(x,y)$ by:

\begin{eqnarray*}
R_{\lambda/\mu}(x,y)=\sum_{T \in PT_k(\lambda/\mu)} x^{dw(T,1)}y^{dw(T,2)},
\end{eqnarray*}
where $(dw(T,1),dw(T,2))$ represents the double weight of the primed tableau, $T$ . Our goal is to show that $F_{\omega}^d$ expands in terms of $R_{\lambda}$.

Let $\omega \in A_n$.  An \defn{Edelman-Greene tableau} for $\omega$ is a tableau in $\{s_1, \ldots, s_n \}$ which is row-wise and column-wise increasing with respect to the order $s_1 < \cdots < s_n$, and which, if read by rows, left to right, bottom to top, forms a reduced word for $\omega$. For viewing convenience we will write $\young(i)$ to mean $\young(\si)$.
We use Edelman-Greene insertion,
\cite{Edelman.Greene.1987}, to create a bijection between $RSIF_k(\omega)$ and pairs of tableaux, $(P,Q)$, where $P$ is an Edelman-Greene tableau for $\omega$, and $Q$ is a primed tableau of the same shape. This bijection is described below.
\begin{definition}
\defn{Primed-Recording Edelman-Greene map}. Suppose $v \in RSIF_k(\omega)$. Create the insertion tableau $P$ by applying Edelman-Greene insertion to $|v|$, the expression obtained by ignoring the subdivisions of $v$ and replacing $s_{-i}$ by $s_i$ for each $i$.  Create the recording tableau, $Q$, as follows: Each time a box  is added to $P$ say in position $(i,j)$ add a box to $Q$ in position $(i,j)$ and fill it as follows: Suppose box $(i,j)$ was added to $P$ when $|v|_r$ was inserted. Let $l$ be the subdivision of $v$ in which $v_r$ occurs in $v$.  If $v_r$ is barred in $v$, fill box $(i,j)$ of $Q$ with $l'$. If $v_r$ is unbarred in $v$, fill box $(i,j)$ of $Q$ with $l$.
\end{definition}

\begin{example}
Let $v=(\bar{3}\bar{2}14)(\bar{3}\bar{2})(\bar{4}13)$. \\
$\{\young(3), \young(\onep)\} \rightarrow
\left(\young(2,3), \, \young(\onep,\onep)\right\} \rightarrow
\left\{\young(1,2,3), \, \young(\onep,\onep,1)\right\}  \rightarrow 
\left\{\young(14,2,3), \, \young(\onep1,\onep,1) \right\}  \rightarrow
\left\{\young(13,24,3), \, \young(\onep1,\onep\twop,1)\right\}  \rightarrow
\left\{\young(12,23,34), \, \young(\onep1,\onep\twop,1\twop)\right\} $\\

$\rightarrow
\left\{\young(124,23,34), \, \young(\onep1\threep,\onep\twop,1\twop)\right\} \rightarrow
\left\{\young(124,23,34,4), \, \young(\onep1\threep,\onep\twop,1\twop,3)\right\} \rightarrow
\left\{\young(123,234,34,4), \, \young(\onep1\threep,\onep\twop3,1\twop,3)\right\}$

\end{example}

\begin{theorem} \label{main}
The Primed-Recording Edelman-Greene map is a double weight preserving bijection: $RSIF_k(\omega) \Rightarrow (P,Q)$,  where $P$ is an Edelman-Greene tableau for $\omega$, and $Q$ is a primed tableau of the same shape.  (The double weight of $(P,Q)$ refers to the double weight of $Q$.)
\end{theorem}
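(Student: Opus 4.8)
The plan is to factor the map through ordinary Edelman--Greene (EG) insertion and to isolate the role played by the bars and the subdivisions. Write $|v|$ for the unsigned, unsubdivided word. The first point is that $|v|$ is a reduced word for $\omega$ in $s_1,\dots,s_n$: since $v$ is reduced and $s_{-i}=s_i$ in the group, $|v|$ represents $\omega$ with the same number of letters, and because $A_n$ is parabolic in $C_{n+1}$ no $s_0$ can occur, so $|v|$ is reduced in $\{s_1,\dots,s_n\}$. By the theorem of Edelman and Greene \cite{Edelman.Greene.1987}, EG insertion of $|v|$ produces an EG tableau $P$ for $\omega$ together with a standard recording tableau $Q_0$ of the same (straight) shape $\lambda$. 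This already yields the claims that $P$ is an EG tableau for $\omega$ and that $Q$ has shape $\lambda/\emptyset$; by construction $Q$ is obtained from $Q_0$ by relabelling the cell created at the $r$-th insertion by $l'$ or $l$, where $l$ is the subdivision of $v_r$ and the prime records whether $v_r$ is barred.

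The heart of the argument is one observation about a single subdivision. Within a subdivision the generators increase under $s_{-n}<\cdots<s_0<\cdots<s_n$, so after applying $|\cdot|$ the indices form a strictly decreasing run (the barred letters, labelled $l'$) followed by a strictly increasing run (the unbarred letters, labelled $l$); that is, each subdivision becomes a ``V-shaped'' segment of $|v|$. I would then invoke the strip behaviour of EG insertion: inserting a strictly decreasing sequence adds boxes in distinct rows (a vertical strip), while inserting a strictly increasing sequence adds boxes in distinct columns (a horizontal strip). Both follow from the two-letter bumping lemma for EG insertion --- the box created by the next letter lies strictly south (weakly west) of the previous one after a descent, and strictly east (weakly north) after an ascent --- applied consecutively along each run. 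Hence the cells labelled $l'$ occupy distinct rows and the cells labelled $l$ occupy distinct columns, which are exactly the conditions that each row has at most one marked $i$ and each column at most one unmarked $i$. The weakly increasing row and column conditions are automatic: the relabelling $r\mapsto(\text{label of }r)$ is weakly order preserving for the alphabet $1'<1<2'<2<\cdots$ (subdivision labels are nondecreasing in insertion order, and within a subdivision all barred letters precede all unbarred ones), so applying it to the standard tableau $Q_0$ preserves weak monotonicity. The remaining conditions and double-weight preservation are then immediate: each inserted letter contributes one cell, a barred letter of subdivision $l$ contributing a primed $l$ and an unbarred one an unmarked $l$, whence $dw(Q,1)=dw(v,1)$, $dw(Q,2)=dw(v,2)$, and $Q\in PT_k(\lambda)$ (the case $j=k$, with no barred entries).

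For bijectivity I would exhibit the inverse via standardization. Given $(P,Q)$ with $P$ an EG tableau for $\omega$ and $Q$ a primed tableau of the same shape, standardize $Q$ to a standard tableau $Q_0$ by assigning consecutive integers to the cells of each value, the primed cells $l'$ ordered top-to-bottom and the unmarked cells $l$ ordered left-to-right; this is the inverse of the relabelling above and is well defined precisely because the $l'$ lie in distinct rows and the $l$ in distinct columns. The EG bijection returns a unique reduced word $|v|$ from $(P,Q_0)$, and the prime/label data of $Q$ prescribe which letters to bar and how to subdivide, producing a candidate $v$. To verify $v\in RSIF_k(\omega)$ one must check each subdivision increases in the signed order; running the two-letter lemma in reverse shows that reverse-inserting a vertical strip top-to-bottom extracts strictly decreasing values and reverse-inserting a horizontal strip left-to-right extracts strictly increasing values, so the barred part of each subdivision is strictly decreasing and the unbarred part strictly increasing in absolute value, i.e. the subdivision is increasing in the signed order. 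The two constructions are mutually inverse because EG insertion is and because standardization and the relabelling are mutually inverse.

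The main obstacle is the strip/bumping lemma for EG insertion together with its reverse. Unlike ordinary RSK row insertion, EG insertion carries the special Coxeter rule (triggered when both $x$ and $x+1$ occur in a row), so the two-letter monotonicity statement must be checked in that case as well; and one must confirm that reverse EG insertion along a strip extracts a \emph{strictly} monotone run, which is exactly what guarantees that every primed tableau is realized by a genuine element of $RSIF_k(\omega)$. Everything else --- the shape, the entry alphabet, the double-weight count, and the weakly increasing conditions --- follows formally once these insertion-theoretic facts are secured.
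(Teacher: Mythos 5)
Your proof is correct and takes essentially the same route as the paper's: both arguments rest on the Edelman--Greene two-letter/descent monotonicity fact (the paper's ``basic fact'') to establish the primed-tableau conditions on $Q$, and both invert the map by standardizing $Q$ (primed entries ordered top-to-bottom, unprimed entries left-to-right) and running reverse Edelman--Greene insertion. Your write-up merely makes explicit several steps the paper leaves implicit, such as the reducedness of $|v|$, the V-shape of each subdivision, and the strictness of the runs extracted by reverse insertion.
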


\begin{proof}  The proof relies on a basic fact of Edelman-Greene insertion of an  unsigned reduced word $v$: If $v=v_1 \ldots v_s$ is inserted under Edelman-Greene, then $v_r < v_{r+1}$ if and only if the box added to the insertion tableau in the $r^{th}$ step is in a row weakly above the row where a box is added in the $(r+1)^{st}$ step.  To see the map is well-defined:  Certainly $P$ is an Edelman-Greene tableau.  Certainly $Q$ has weakly increasing rows and columns.  Moreover, for each value of $i$, there is at most one $i$ in each column  because of the forward direction of the basic fact.  And there is at most one $i'$ in each row because of the backwards direction of the basic fact.

The inverse is obtained by applying reverse Edelman-Greene insertion to $P$ in the order prescribed by the standardization of $Q$. (The standardization being the standard Young tableau obtained from $Q$ by extending the partial order induced on the boxes of $Q$ by the order of $X'$ and then using the following rule: If box $b$ and box $b'$ both contain $i$ then $b<b'$ if and only if $b$ lies in a column to the left of $b'$.  If box $b$ and box $b'$ both contain $i'$ then $b<b'$ if and only if $b$ lies in a row above $b'$.) This produces an element of $A_n$.  Now, to make it a signed  factorization, its subdivisions and the signs on the indices are then added in the unique way such that the resulting factorization has the same double weight as $Q$.  Again, the basic fact implies that this inverse is well-defined.

\end{proof}

It now immediately follows from Theorem \ref{main} that:

\begin{theorem}\label{expa}
\begin{eqnarray*}
F_{\omega}^{d}(\mathbf{x},\mathbf{y})= \sum_{T \in E(\omega)} R_{sh(T)}(\mathbf{x},\mathbf{y}),
\end{eqnarray*}
where $E(\omega)$ is the set of Edelman-Greene tableaux for $\omega$ and $sh(T)$ denotes the shape of $T$.
\end{theorem}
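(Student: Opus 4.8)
The plan is to unwind the definition of $F_\omega^d$ and feed it directly into the bijection established in Theorem \ref{main}. Fixing $k$, I would start from
\[
F_\omega^d(x,y) = \sum_{v \in RSIF_k(\omega)} x^{dw(v,1)} y^{dw(v,2)},
\]
and apply the Primed-Recording Edelman-Greene map $v \mapsto (P,Q)$. Because that map is a double-weight preserving bijection, I can replace each monomial $x^{dw(v,1)} y^{dw(v,2)}$ by $x^{dw(Q,1)} y^{dw(Q,2)}$ and reindex the sum over the pairs $(P,Q)$ in its image.

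The key structural observation is that the image of the bijection is exactly the set of pairs $(P,Q)$ where $P$ ranges over $E(\omega)$ and, independently, $Q$ ranges over all primed tableaux of shape $sh(P)$; this independence is precisely the content of Theorem \ref{main}. Hence I would group the reindexed sum by the insertion tableau $P$: for each fixed $P \in E(\omega)$ the recording tableaux $Q$ run over all of $PT_k(sh(P))$, so the inner sum is
\[
\sum_{Q \in PT_k(sh(P))} x^{dw(Q,1)} y^{dw(Q,2)} = R_{sh(P)}(x,y)
\]
by the definition of $R_{\lambda}$. Summing over $P \in E(\omega)$ then yields the polynomial identity $F_\omega^d(x,y) = \sum_{T \in E(\omega)} R_{sh(T)}(x,y)$ for every $k$.

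Finally, since $E(\omega)$ is a finite set that does not depend on $k$, and the convention of the paper is that the infinite-variable functions are the $k \to \infty$ limits of the corresponding polynomials, I would take $k \to \infty$ on both sides to obtain the stated identity $F_\omega^d(\mathbf{x},\mathbf{y}) = \sum_{T \in E(\omega)} R_{sh(T)}(\mathbf{x},\mathbf{y})$. There is essentially no obstacle here beyond bookkeeping: all the substantive work—well-definedness, invertibility, and double-weight preservation—has already been carried out in Theorem \ref{main}. The only point genuinely requiring care is the independence claim, namely that for a fixed Edelman-Greene tableau $P$ every primed tableau of its shape actually arises as a recording tableau; this is exactly why the explicit inverse (reverse insertion guided by the standardization of $Q$) in the proof of Theorem \ref{main} is needed, and with it in hand the present theorem is a direct consequence.
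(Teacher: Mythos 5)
Your proposal is correct and follows exactly the route the paper intends: the paper simply states that Theorem \ref{expa} ``immediately follows'' from Theorem \ref{main}, and your write-up is precisely the grouping-by-insertion-tableau argument (with the $k\to\infty$ bookkeeping) that this implication consists of. No difference in approach, only in the level of detail.
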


\section{Crystal Structure and Schur expansion}

An \defn{(abstract) bicrystal} of type $A_{k-1}$ is a nonempty set $B$ together with the maps
\begin{equation}
\begin{split}
	e_i, f_i &: B \rightarrow B \cup \{0\} \\
	e_{\bar{i}}, f_{\bar{i}} &: B \rightarrow B \cup \{0\} \\
       dw &: B \rightarrow \Lambda \times \Lambda
\end{split}
\end{equation}
where $\Lambda = \mathbb{Z}^{k}_{\geqslant 0}$ is the weight lattice of the root of type $A_{k-1}$ and $1 \leq i \leq k-1$.  Denote by $\alpha_i =\epsilon_i -\epsilon_{i+1}$ for the simple
roots of type $A_{k-1}$, where $\epsilon_i$ is the $i$-th standard basis vector of $\mathbb{Z}^{k}$. Then we require:
\begin{itemize}
\item[\textbf{A1.}]  For $b,b'\in B$, we have $f_ib=b'$ if and only if $b=e_ib'$. In this case $dw(b') =dw(b)-(0,\alpha_i)$.
\item[\textbf{A2.}]  For $b,b'\in B$, we have $f_{\bar{i}}b=b'$ if and only if $b=e_{\bar{i}}b'$. In this case $dw(b') =dw(b)-(\alpha_i,0)$.
\end{itemize}

In this section we induce (via Theorem \ref{main}) an $A_{k-1}$ ($k$ as before) bicrystal structure on the set of reduced signed increasing factorizations of $\omega$ by explicitly defining crystal operators on the set of primed tableaux.  This structure is isomorphic to the bicrystal structure on pairs of SSYT obtained by doubling the usual crystal structure on SSYT (see \cite{Bump.Schilling.2017}). As a result, we will obtain an expansion of $F_{\omega}^d$ as a product of Schur functions corresponding to certain highest weight primed tableaux.

 Given a primed tableau $T$, we will define the \emph{reading word} of $T$ to be the word, composed  only of unprimed entries, obtained by reading the unprimed numbers by row, left to right, moving from bottom to top. Throughout the remainder of the section, we will set $j=i+1$.  The $i-j$ subword of a word is defined to be the word of $i$s and $j$s obtained by erasing all other entries from the word.  The $i-j$ bracketing on the $i-j$ subword is defined as usual in type A.  Each step in the ordering $1'<1<2'<2\cdots$ will be considered a half unit.  In particular, we say that $i$ is obtained from $i'$ by increasing by a half unit, $j'$ is obtained from $i$ by increasing by a half unit, and $j$ is obtained from $j'$ by increasing by a half unit. Finally, if $p$ is a position in $T$, we will write $c(p)$ to mean the content of $p$.  By convention if position $p$ does not describe a filled box of $T$ we take $c(p)= \infty$.

First we define operators $f_i$ on the set of primed tableaux as follows.
\begin{definition}[\textbf{$f_i$ operator}]

First, if the $i-j$ reading subword of $T$ has no unbracketed $i$s, then $f_i(T)=0$.  Otherwise let $x$ denote the position in $T$ corresponding to the rightmost unbracketed $i$ in the $i-j$ reading subword of $T$. $f_i(T)$ is obtained from $T$  by increasing $c(x)$ and $c(q)$ by a half unit for some box $q$, determined as follows: Denote the position immediately to the right of $x$ as $E_x$ and the position immediately below it as $S_x$: 

\begin{itemize}

\item[F1:] { If $c(E_x)\geq j$ and $c(S_x) > j$, set $q=x$.}

\item[F2:] {If $c(E_x)=j'$, set $q=E_x$. }

\item[F3:] {If $c(E_x) \geq j$  and $c(S_x)=j'$ or $c(S_x)=j$, consider the maximal ribbon beginning on $S_x$ and extending in the South and/or West directions which contains only $j$s and $j'$s. Let $q$ be the Southwest-most position of this ribbon.}

\end{itemize}

\end{definition}

\begin{lemma}
If the hypothesis of case F2 holds, then $c(S_q)>j$. If the hypothesis of case F3 holds, then for each $j$ in the ribbon, there is an $i$ in the box diagonally above and to the left of this $j$, and $c(q)=j'$.
\end{lemma}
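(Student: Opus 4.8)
The plan is to read both assertions as the well-definedness conditions guaranteeing that $f_i(T)$ is again a primed tableau, and to prove them from the defining constraints of $PST(\lambda/\mu,X,Y,k)$ together with the characterization of $x$ as the rightmost unbracketed $i$. Throughout write $x=(r_0,c_0)$, so that $E_x=(r_0,c_0+1)$, $S_x=(r_0+1,c_0)$, and $S_{E_x}=(r_0+1,c_0+1)$; I use the standard type $A$ bracketing in which each unprimed $j$ of the reading word is an open bracket and each unprimed $i$ is a close bracket, so that $x$ is the last close admitting no unmatched open to its left. Two elementary consequences of the tableau axioms will be used repeatedly: directly below an unprimed $i$ the entry is $\ge j'$ (weak column increase together with at most one unprimed $i$ per column); and, since rows are read from the bottom upward, the bracket partner of an open always lies weakly above it, so every open of row $r_0+1$ is still unmatched when the bracketing enters row $r_0$, and any open surviving until $x$ is reached must then pair with $x$.

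For \textbf{Part (a)} (case F2, $c(E_x)=j'$), I would rule out the two forbidden values of $c(S_q)=c(S_{E_x})$ in turn. First, $c(S_{E_x})=j'$ is impossible: weak increase gives $j'\le c(S_x)\le c(S_{E_x})=j'$, so $c(S_x)=j'$ as well, placing two primed $j$'s in row $r_0+1$ and violating the at-most-one-primed-$j$-per-row condition. To exclude $c(S_{E_x})=j$, let $b\ge 1$ be the number of unprimed $i$'s of row $r_0$ in columns $\le c_0$ (so $x$ is the $b$-th of them). Each such $i$ sits above an entry of row $r_0+1$ that is $\ge j'$ and, being left of $S_{E_x}$, is $\le j$; hence all $b$ of these entries lie in $\{j',j\}$, and as the row carries at most one primed $j$ at least $b-1$ of them are unprimed $j$'s. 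Together with $S_{E_x}$ itself this exhibits at least $b$ opens in row $r_0+1$, all unmatched when the bracketing enters row $r_0$. The $b-1$ closes preceding $x$ therefore leave at least one open on the stack, which pairs with $x$; this contradicts $x$ being unbracketed, so $c(S_{E_x})>j$.

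For \textbf{Part (b)} (case F3) I would first pin down the ribbon's local shape from monotonicity: as its entries lie in $\{j',j\}$ and each column holds at most one unprimed $j$, a South step must leave a $j'$-cell and a West step cannot increase the entry, so the ribbon is a staircase running down-and-left whose every unprimed $j$ is immediately continued toward the tip by a cell to its West. To show $c(q)=j'$ I argue by contradiction: if the tip carried an unprimed $j$ it would be an open whose South and West neighbors are absent or $\le i$, so nothing consumes it as the bracketing climbs, and propagating the stack upward this surplus open survives to pair with $x$ --- contradicting the choice of $x$. Granting $c(q)=j'$, I would obtain the diagonal statement by inducting along the ribbon from $q$ toward $S_x$: for an unprimed $j$ at $(r,c)$ the entry due North is $\le j'$ and the NW entry is no larger, while the one-primed-per-row condition forbids the NW entry from being $j'$ (else row $r-1$ would carry two primed $j$'s), so it is $\le i$; and since every such ribbon-$j$ must itself be bracketed, monotonicity forces its partner to be precisely the NW entry, which is hence the unprimed $i$ claimed.

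The genuinely hard part is \textbf{Part (b)}. In Part (a) the obstruction lives in the single row $r_0+1$, so a direct count of opens settles it; in Part (b) the ribbon may span many rows, and deciding which close each ribbon-$j$ (and the tip) brackets with requires following the bracketing stack globally as it ascends through all the intervening rows. I expect the bulk of the work to be exactly this propagation --- establishing both that a $j$ at the tip must survive to bracket $x$ and that the partner of each interior ribbon-$j$ is forced onto its NW-diagonal --- with Part (a) serving as the warm-up that exhibits the mechanism in one row.
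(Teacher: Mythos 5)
Your Part (a) is complete and is essentially the paper's own argument: you rule out $c(S_{E_x})=j'$ by forcing a second $j'$ into row $r_0+1$, and you rule out $c(S_{E_x})=j$ by counting the entries of row $r_0+1$ below the $i$'s of row $r_0$ and exhibiting at least $b$ opens against only $b-1$ closes preceding $x$. Your determination of the ribbon's staircase shape (South steps must leave $j'$-cells, West steps must leave $j$-cells) is also correct and is tacitly used by the paper.

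The gap is in Part (b), and you have flagged it yourself. The two sentences carrying the real weight --- ``nothing consumes it as the bracketing climbs, and propagating the stack upward this surplus open survives to pair with $x$'' and ``monotonicity forces its partner to be precisely the NW entry'' --- do not follow from monotonicity alone: the bracketing is a global stack matching, and a priori the open at $(r,c)$ could be closed by an $i$ sitting anywhere in rows $r-1,\dots,r_0$. What actually forces both conclusions is a row-by-row count. Write $a_r,b_r$ for the leftmost and rightmost ribbon columns in row $r$, so $b_r=a_{r-1}$ and $(r-1,a_{r-1})$ carries a $j'$. Every $i$ of row $r-1$ lies strictly left of that $j'$, hence in a column $<b_r$, and sits above an entry of row $r$ that is $>i$, hence $\geq j'$, hence in a column $\geq a_r$; so row $r-1$ offers at most $b_r-a_r$ closes, while row $r$ contributes at least $b_r-a_r$ opens (its ribbon $j$'s), plus one more if the tip is a $j$ or if any $j$ sits to the right of column $b_r$. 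Summing over the rows from $r_q$ up to $r_0+1$ and comparing with the $m_0-1$ closes preceding $x$ in row $r_0$ (which are similarly confined to at most $b_{r_0+1}-a_{r_0+1}$ columns) forces equality everywhere; the equality simultaneously places the $i$'s at exactly the NW-diagonal positions and rules out a $j$ at the tip. Until this count is written down, both of your key assertions are unsupported (and, to be fair, the paper's own ``the only way to fit all the necessary $i$s'' is nearly as terse at the same point). Note also that the paper obtains $c(q)=j'$ for free once the diagonal claim is known: if $c(q)=j$ there would be an $i$ diagonally above and to the left of $q$, forcing $c(W_q)\in\{j',j\}$ and contradicting maximality of the ribbon. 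Proving $c(q)=j'$ first, as you propose, makes you pay the propagation cost twice; I would reverse the order of your two subclaims.
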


\begin{proof}
For the first statement, note that we cannot have $c(S_q)=j'$ as this would imply $c(S_x)<j'$, which contradicts $c(S_x)>i=c(x)$.  Moreover, if we have $c(S_q)=j$, then let $k$ denote the number of $j$s in this row.  In order for the $i$ in $x$ to be unbracketed the row above must have at least $k+1$ $i$s.  This implies its leftmost two $i$s  lie above entries less than $j$.  The only such possibility is $j'$ which would contradict the condition that there is at most one $j'$ in each row.

Now to the second statement. In order for the $i$ in position $x$ to be unbracketed, every $j$ in the ribbon must be bracketed with an $i$ which appears in between $q$ and $x$ in the reading word order.  The only way to fit all the necessary $i$s is as described in the statement of the lemma.  Now, if $c(q) \neq j'$ then $c(q)=j$ which implies the box diagonally above $q$ contains an $i$ by the previous sentence.  But this implies the entry to the left of $q$ is $j$ or $j'$ which contradicts the maximality of the ribbon.
\end{proof}

\begin{lemma}
If $f_i(T)\neq0$ then either $q=x$, in which case $f_i$ changes an $i$ in $x$ to a $j$, or else $f_i$ changes an $i$ in $x$ to $j'$ and a $j'$ in $q$ to $j$. Moreover, $f_i(T)$ is a valid primed tableau.
\end{lemma}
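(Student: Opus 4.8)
The plan is to dispatch the description of the action first, since it is immediate from the definitions and the preceding lemma, and then reuse it in the validity check. Unwinding the half-unit convention: in case F1 we have $q=x$, so $c(x)$ is raised by a full unit, carrying the unprimed $i$ in $x$ past $j'$ to $j$; in cases F2 and F3 we have $q\neq x$, and raising $c(x)=i$ and $c(q)=j'$ each by a half unit turns the $i$ in $x$ into $j'$ and the $j'$ in $q$ into $j$. Here the equality $c(q)=j'$ is the hypothesis of F2 (as $q=E_x$) and is exactly the conclusion of the preceding lemma in F3. This yields the two stated alternatives.

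For the second assertion the plan is to verify the defining conditions of a primed tableau, noting that most are automatic: the shape is unchanged, $f_i$ never introduces a barred letter, and the double weight merely trades one unprimed $i$ for one unprimed $j$ (the primed-$j$ count being preserved in F2 and F3, and untouched in F1). Since only the boxes $x$ and $q$ are altered, and since the move deletes an unprimed $i$ while creating a primed $j'$ and an unprimed $j$, the only conditions that can newly fail are weak monotonicity of the rows and columns through $x$ and $q$, the bound of one primed $j'$ per row, and the bound of one unprimed $j$ per column. I would check these locally, case by case, from the case hypotheses and the preceding lemma.

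The checks at $x$ are uniform: the box west of $x$ is $\le i$, the box north of $x$ is $\le i'$, and, as $x$ is the unique unprimed $i$ in its column, the box south of $x$ is $\ge j'$; this is enough to seat a $j'$ at $x$ in F2 and F3, while in F1 the hypotheses $c(E_x)\ge j$ and $c(S_x)>j$ directly accommodate the larger value $j$ and make its new unprimed $j$ the only one of its column. The work concentrates at $q$. In F2, $q=E_x$ lies in $x$'s own row, so the change is a relabelling $j'\mapsto j$ with the freed $j'$ reappearing just to its west at $x$; monotonicity within the row is then clear, and the preceding lemma's $c(S_q)>j$ rules out a second unprimed $j$ directly below $q$. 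The crux is F3, where $q$ is the southwest-most cell of the maximal $\{j',j\}$-ribbon through $S_x$: here I would use maximality to force the cells west and south of $q$ out of the ribbon, so they are $\le i$ and $>j$ respectively, and combine this with $c(q)=j'$ and the one-$j'$-per-row bound (which pins the cell east of $q$ at $\ge j$) to conclude that relabelling $q$ as $j$ respects every row and column and creates no repeated unprimed $j$ in $q$'s column. I expect F3 to be the principal obstacle, as it is the only case in which the ribbon geometry must be combined with both conclusions of the preceding lemma.
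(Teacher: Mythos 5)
Your proposal is correct and follows the same route as the paper, which simply asserts that F1 is immediate and that F2 and F3 follow from the preceding lemma; you have written out in full the local monotonicity and one-$j'$-per-row / one-$j$-per-column checks at $x$ and $q$ that the paper leaves to the reader. Your use of the ribbon's maximality to bound the cells west and south of $q$ in case F3 is exactly the verification the paper's terse proof presupposes.
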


\begin{proof}
This is immediate for F1.  For the other two cases it follows from the previous lemma.
\end{proof}

Now we define operators $e_i$ on the set of primed tableaux as follows.
\begin{definition}[\textbf{$e_i$ operator}]

First, if the $i-j$ reading subword of $T$ has no unbracketed $j$s, then $e_i(T)=0$.  Otherwise let $y$ denote the position in $T$ corresponding to the leftmost unbracketed $j$ in the $i-j$ reading subword of $T$. $e_i(T)$ is obtained from $T$  by decreasing $c(y)$ and $c(p)$ by a half unit for some box $p$, determined as follows: Denote the position immediately to the left of $y$ as $W_y$ and the position immediately above it as $N_y$: 

\begin{enumerate}

\item{ If $c(W_y)\leq i$ and $c(N_y) < i$, set $p=y$.}

\item{ If $c(W_y)=j'$, set $p=W_y$. }

\item{ If $c(W_y) \leq i$  and $c(N_y)=i$ or $c(N_y)=j'$, consider the maximal ribbon beginning on $N_y$ and extending in the North and/or East directions which contains only $i$s and $j'$s. Let $p$ be Northeast-most position of this ribbon.}

\end{enumerate}

\end{definition}

Symmetric arguments to those given above show:

\begin{lemma}
If $e_i(T)\neq0$ then either $p=y$, in which case $e_i$ changes a $j$ in $y$ to an $i$, or else $e_i$ changes an $j$ in $y$ to $j'$ and a $j'$ in $p$ to $i$. Moreover, $e_i(T)$ is a valid primed tableau.
\end{lemma}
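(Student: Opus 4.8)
The plan is to obtain this lemma as the formal mirror image of the two lemmas already proven for the $f_i$ operator, since the $e_i$ definition is set up precisely so that each of its three cases is the reflection of the corresponding $f_i$ case under the order-reversing symmetry of the alphabet $\bar{X}_k'$. First I would make this symmetry explicit. Let $\sigma$ be the involution on positions/contents that reverses the total order $1'<1<2'<2<\cdots<k'<k$ restricted to the relevant window, sending $i \leftrightarrow j$ and $i' \leftrightarrow j'$ (equivalently, composing the reflection of the tableau that swaps North with South and East with West, with the content reversal). Under $\sigma$ an unbracketed $j$ that is leftmost corresponds to an unbracketed $i$ that is rightmost, the positions $W_y,N_y$ correspond to $E_x,S_x$, the ``decrease by a half unit'' corresponds to ``increase by a half unit,'' and cases (1),(2),(3) of the $e_i$ definition map exactly onto cases F1, F2, F3 of the $f_i$ definition. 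Thus $e_i = \sigma \circ f_i \circ \sigma$ on its domain, and the conclusion for $e_i$ is literally the $\sigma$-image of the conclusion for $f_i$.

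With that correspondence in place, I would carry out the argument in two steps mirroring the earlier development. First, I would state and verify the analogue of the preparatory lemma: if the hypothesis of case (2) holds then $c(N_p)<i$, and if the hypothesis of case (3) holds then for each $i$ in the ribbon there is a $j$ in the box diagonally below and to the right, and $c(p)=j'$. Each of these is obtained by applying $\sigma$ to the corresponding statement of the first lemma; alternatively, one reruns the bracketing count verbatim with the roles of $i,j$ and of the four compass directions interchanged. Second, I would transport the main conclusion: in case $p=y$ the operator lowers a $j$ to an $i$, and otherwise it lowers the $j$ in $y$ to $j'$ and raises the $j'$ in $p$ to $i$; validity of the output tableau again follows by applying $\sigma$ to the validity of $f_i(T)$, since $\sigma$ preserves the defining conditions (rows/columns weakly increasing and the marking constraints are all symmetric under simultaneous order-reversal and reflection).

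The one place requiring genuine care, and the step I expect to be the main obstacle, is justifying that $e_i$ really is $\sigma$-conjugate to $f_i$ rather than merely analogous. The subtlety is that the $i$-$j$ bracketing rule and the choice of rightmost/leftmost unbracketed letter depend on the reading word, which reads left-to-right and bottom-to-top; under the reflection $\sigma$ the reading word gets reversed, and one must check that the standard type-$A$ bracketing is compatible with this reversal in the precise sense that leftmost unbracketed $j$'s map to rightmost unbracketed $i$'s. This is the usual fact that the $\epsilon$- and $\varphi$-data of type-$A$ crystal operators are exchanged under the Lusztig involution, so I would invoke that compatibility explicitly and confirm it survives the presence of primed entries (which are invisible to the reading word and hence unaffected by the bracketing, entering only through the local ribbon analysis, where $\sigma$ acts transparently). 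Once this compatibility is pinned down, the remaining verifications are routine reflections of the $f_i$ arguments, and the phrase ``Symmetric arguments to those given above show'' in the statement is fully justified.
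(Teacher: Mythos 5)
Your proposal is correct and is essentially the paper's own argument: the paper's entire proof of this lemma is the sentence ``Symmetric arguments to those given above show,'' and your explicit order-reversing, $180^\circ$-rotation involution $\sigma$ --- together with the observation that type-$A$ bracketing exchanges the leftmost unbracketed $j$ with the rightmost unbracketed $i$ under word reversal plus letter swap --- is exactly the right way to make that symmetry precise. One small correction: the content involution must be the reversal of the three-letter window $i<j'<j$, which \emph{fixes} $j'$ (so that case F2, testing for a $j'$ to the East, maps to case (2), testing for a $j'$ to the West); your parenthetical ``$i'\leftrightarrow j'$'' would send $j'\mapsto i'$ and break that matching, but since $i'$ plays no role in either operator the argument is unaffected once $\sigma$ is stated correctly.
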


In fact $f_i$ and $e_i$ so defined are inverses:

\begin{proposition}\label{inv}
If $f_i(T) \neq 0$ then $e_i(f_i(T))=T$.  Similarly, if $e_i(T) \neq 0$ then  $f_i(e_i(T))=T$.
\end{proposition}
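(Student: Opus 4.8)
The plan is to prove the first identity, $e_i(f_i(T))=T$ for $f_i(T)\neq 0$; the second then follows from the mirror symmetry (reflect the tableau through its center and interchange the roles $i\leftrightarrow j$, $x\leftrightarrow y$, $q\leftrightarrow p$, together with the directions North/South and East/West), under which the definition of $e_i$ becomes that of $f_i$. Write $T'=f_i(T)$, and let $x$ and $q$ be the two positions produced by the definition of $f_i$. The argument splits into two tasks: first, to determine which box $e_i$ selects as its distinguished position $y$ in $T'$; and second, to check that the case of $e_i$ triggered at that box is exactly the one reversing $f_i$, returning the two affected entries to their original values.

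For the first task I would track the effect of $f_i$ on the reading word. By the two structural lemmas, $f_i$ deletes the $i$ contributed by $x$ and creates exactly one new unprimed $j$: in case F1 this $j$ occupies $x$ itself, whereas in cases F2 and F3 it occupies $q$ (the entry of $x$ having been demoted to the primed $j'$, which is invisible to the reading word). I claim this new $j$ is precisely the leftmost unbracketed $j$ of $T'$, hence the box $y$ chosen by $e_i$. In F1 and F2 the two boxes involved are adjacent in reading order, so on the $i$-$j$ subword the move looks like an in-place change of the rightmost unbracketed $i$ into a $j$, and the claim is the usual type $A$ statement that this $j$ is then leftmost unbracketed. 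In F3 one must instead use the bracketing information recorded in the proof of the structural lemma, namely that every $j$ of the ribbon is paired with an $i$ lying between $q$ and $x$ in reading order; this is what guarantees that, after deleting the $i$ at $x$ and promoting $q$ to a $j$, the new $j$ at $q$ is left unbracketed and is the leftmost such.

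For the second task I would match the cases of $f_i$ and $e_i$ one by one. In F1 we have $y=x$; since $x$ held an unmarked $i$ in $T$, weak row-increase gives $c(W_y)\leq i$, while weak column-increase together with the ``at most one unmarked $i$ per column'' condition forces $c(N_y)<i$, so $e_i$ enters its first case, with $p=y$, and restores the $j$ in $x$ to $i$. In F2 we have $y=E_x$ and $c(W_y)=c(x)=j'$ in $T'$, so $e_i$ enters its second case, with $p=x$, restoring the $j'$ in $x$ to $i$ and the $j$ in $E_x$ to $j'$. In F3 we have $y=q$, and here the structural lemma (which supplies $c(q)=j'$ in $T$ together with an $i$ diagonally above-left of each ribbon $j$) shows both that $c(N_q)\in\{i,j'\}$, triggering the third case of $e_i$, and that the North/East ribbon of $i$s and $j'$s grown from $N_q$ has North-East-most box equal to $x$; then $e_i$ returns the $j$ in $q$ to $j'$ and the $j'$ in $x$ to $i$, recovering $T$ exactly.

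The main obstacle is case F3, where two facts must be established with care: that the $j$ newly created at $q$ is the \emph{leftmost} unbracketed $j$ of $T'$ even though $q$ precedes $x$ in the reading word with several bracketed ribbon pairs in between, and that the $e_i$-ribbon traced upward from $N_q$ terminates exactly at $x$, so that the two half-unit decrements of $e_i$ fall on the very boxes $x$ and $q$ that $f_i$ incremented. Both reduce to the combinatorics of the diagonal $i$s furnished by the structural lemma, and I expect the cleanest route is to make explicit the bijection between the $j$-boxes of the ribbon and their bracketing $i$s and to follow how it degenerates once $q$ becomes the unique unbracketed $j$. With F3 settled, the cases F1 and F2 are immediate, and the reverse implication holds by the mirror symmetry noted at the outset, completing the proof of Proposition \ref{inv}.
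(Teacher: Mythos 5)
Your proposal follows essentially the same route as the paper's proof: the paper simply asserts that if $f_i$ acts by case $C$ at positions $x$ and $q$, then $e_i$ acts on $f_i(T)$ by the corresponding case with $y=q$ and $p=x$, so that the two half-unit increments are undone; your case-by-case matching (F1/F2 via the in-place change on the $i$--$j$ subword, F3 via the diagonal $i$s supplied by the structural lemma) is precisely the verification of that assertion. Indeed, you supply considerably more detail than the paper, which dismisses the whole matter with ``it is not difficult to see,'' and the points you flag as needing care in F3 are exactly the ones the paper leaves implicit.
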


\begin{proof}
Suppose that $f_i(T)$ is obtained from $T$ by case $C$ ($C \in \{1,2,3\}$) of the definition of $f_i$, with $x$ and $q$ representing the positions selected as in the definition.  Then it is not difficult to see that $e_i(f_i(T))$ can be obtained from $f_i(T)$ by case $k$ in the definition of $e_i$ with positions $y=q$ and $p=x$. Since a half unit has been added and then subtracted from positions $x$ and $q$ it is clear that  $e_i(f_i(T))=T$. The second statement is proved symmetrically. 
\end{proof}

We now define operators $f_{\bar{i}}$ and $e_{\bar{i}}$ on the set of primed tableaux and on words.  On words these operators act as the usual type A operators restricted to primed entries. To describe their action on primed tableaux, we use a special type of transposition.

\begin{definition}
Given a primed tableau, $T$, we define $T^{+}$ to be obtained by transposing $T$ and then adding a half unit to each entry.  Similarly we define $T^{-}$ to be obtained by transposing $T$ and then subtracting a half unit from each entry.  Applied to a word $w$, $w^+$ and $w^-$ indicate  simply adding a half unit and subtracting a half unit from each entry respectively.
\end{definition}
Clearly, we have $(T^{+})^{-}=T$ and $(w^+)^-=w$. 
\begin{definition}
We define $f_{\bar{i}}(T)=(f_i(T^+))^-$ and $e_{\bar{i}}(T)=(e_i(T^+))^-$ for  any primed tableau, $T$.  Similarly we define $f_{\bar{i}}(w)=(f_i(w^+))^-$ and $e_{\bar{i}}(w)=(e_i(w^+))^-$ for any primed word, $w$.
\end{definition}

\begin{figure}

\begin{center}

\begin{tikzpicture}[>=latex,line join=bevel,]

 \node (node_1) at (0bp,0bp) [draw,draw=none] {\young(\onep1,1\twop)};
 \node (node_2) at (70bp,0bp) [draw,draw=none] {\young(\onep\twop,12)};
 \node (node_3) at (140bp,30bp) [draw,draw=none] {\young(\onep\twop,22)};
 \node (node_4) at (140bp,-30bp) [draw,draw=none] {\young(\onep\twop,13)};
 \node (node_5) at (210bp,0bp) [draw,draw=none] {\young(\onep\twop,23)};
 \node (node_6) at (280bp,0bp) [draw,draw=none] {\young(\onep\twop,33)};

\draw [solid,blue,->] (node_1) -- (node_2) node[midway,above,black] {$f_1$};
\draw [solid,blue,->] (node_2) -- (node_3) node[midway,above,black] {$f_1$};
\draw [solid,red,->] (node_2) -- (node_4)node[midway,above,black] {$f_2$};
\draw [solid,red,->] (node_3) -- (node_5)node[midway,above,black] {$f_2$};
\draw [solid,blue,->] (node_4) -- (node_5);
\draw [solid,red,->] (node_5) -- (node_6) node[midway,above,black] {$f_2$};
  \node (node_11) at (0bp,-100bp) [draw,draw=none] {\young(\onep1,1\threep)};
 \node (node_12) at (70bp,-100bp) [draw,draw=none] {\young(\onep2,1\threep)};
 \node (node_13) at (140bp,-70bp) [draw,draw=none] {\young(\onep2,2\threep)};
 \node (node_14) at (140bp,-130bp) [draw,draw=none] {\young(\onep\threep,13)};
\node (node_15) at (210bp,-100bp) [draw,draw=none] {\young(\onep\threep,23)};
\node (node_16) at (280bp,-100bp) [draw,draw=none] {\young(\onep\threep,33)};

\draw [solid,blue,->] (node_11) -- (node_12) node[midway,above,black] {$f_1$};
\draw [solid,blue,->] (node_12) -- (node_13) ;
\draw [solid,red,->] (node_12) -- (node_14) ;
\draw [solid,red,->] (node_13) -- (node_15) ;
\draw [solid,blue,->] (node_14) -- (node_15);
\draw [solid,red,->] (node_15) -- (node_16) node[midway,above,black] {$f_2$};


 \node (node_21) at (0bp,-200bp) [draw,draw=none] {\young(11,\twop\threep)};
 \node (node_22) at (70bp,-200bp) [draw,draw=none] {\young(12,\twop\threep)};
 \node (node_23) at (140bp,-170bp) [draw,draw=none] {\young(\twop2,2\threep)};
 \node (node_24) at (140bp,-230bp) [draw,draw=none] {\young(1\threep,\twop3)};
 \node (node_25) at (210bp,-200bp) [draw,draw=none] {\young(\twop\threep,23)};
 \node (node_26) at (280bp,-200bp) [draw,draw=none] {\young(\twop\threep,33)};

\draw [solid,blue,->] (node_21) -- (node_22) node[midway,above,black] {$f_1$};
\draw [solid,blue,->] (node_22) -- (node_23) ;
\draw [solid,red,->] (node_22) -- (node_24) node[midway,above,black] {$f_2$};
\draw [solid,red,->] (node_23) -- (node_25) node[midway,above,black] {$f_2$};
\draw [solid,blue,->] (node_24) -- (node_25) node[midway,above,black] {$f_1$};
\draw [solid,red,->] (node_25) -- (node_26)  node[midway,above,black] {$f_2$};
 \draw [dashed,blue,->] (node_1) -- (node_11) node[midway,left,black] {$f_{\bar{1}}$};
 \draw [dashed,blue,->] (node_2) -- (node_12) node[midway,left,black] {$f_{\bar{1}}$};
 \draw [dashed,blue,->] (node_3)  to [out=300,in=30] (node_13) ;
 \draw [dashed,blue,->] (node_4) to [out=240,in=150] (node_14) ;
 \draw [dashed,blue,->] (node_5) -- (node_15) node[midway,left,black] {$f_{\bar{1}}$};
 \draw [dashed,blue,->] (node_6) -- (node_16) node[midway,left,black] {$f_{\bar{1}}$};
 
 \draw [dashed,red,->] (node_11) -- (node_21) node[midway,left,black] {$f_{\bar{2}}$};
 \draw [dashed,red,->] (node_12) -- (node_22) node[midway,left,black] {$f_{\bar{2}}$};
 \draw [dashed,red,->] (node_13)  to [out=300,in=30] (node_23);
 \draw [dashed,red,->] (node_14) to [out=240,in=150] (node_24);
 \draw [dashed,red,->] (node_15) -- (node_25) node[midway,left,black] {$f_{\bar{2}}$};
 \draw [dashed,red,->] (node_16) -- (node_26) node[midway,left,black] {$f_{\bar{2}}$};

\end{tikzpicture}
\caption{A connected component of the $A_2$ bicrystal for $\lambda=(2,2)$. A few labels are omitted to prevent cluttering.}

\label{fig:}

\end{center}

\end{figure}
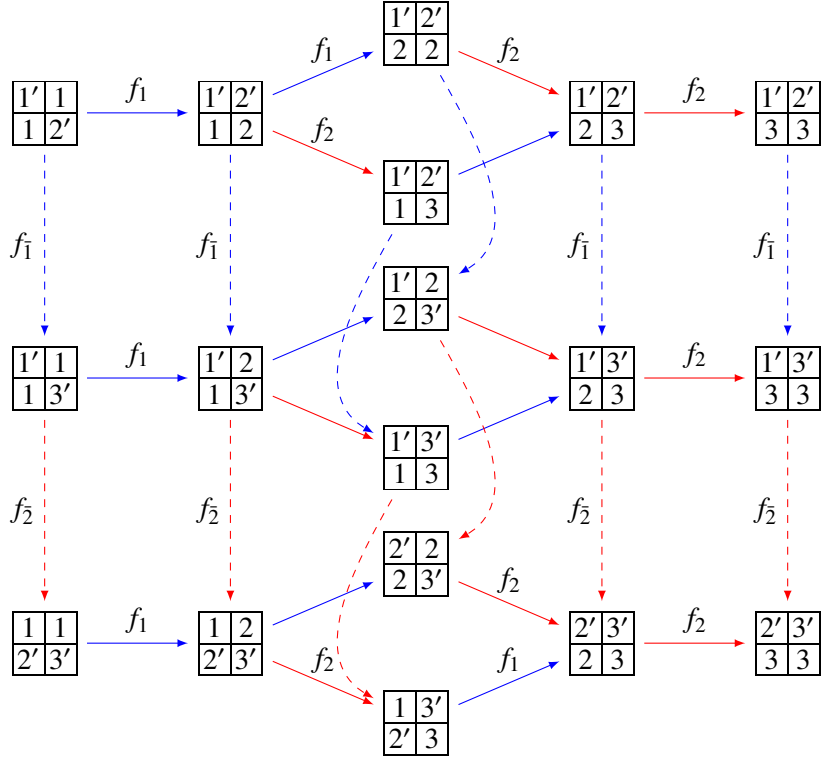

\subsection{Proof that the operators $f_i,e_i,f_{\bar{i}},e_{\bar{i}}$ form an $A_{k-1}$ bicrystal}
We consider words in the alphabet $X_k'$.  For such a word $w$, The \emph{double weight} of $w$, denoted $(dw(w,1),dw(w,2))$ is the pair $(X,Y)$, where the $i^{th}$ entry of $X$ records the number of primed $i$s in $w$, and  the $i^{th}$ entry of $Y$ records the number of unprimed $i$s in $w$.  We have a natural bicrystal structure on such words: $f_i(w)$ and $e_i(w)$ refer to the usual crystal operators on words (e.g., \cite{Bump.Schilling.2017}) restricted to the nonprimed letters of $w$, while $f_{\bar{i}}(w)$ and $e_{\bar{i}}(w)$ refer to the usual crystal operator on words (e.g., \cite{Bump.Schilling.2017}) restricted to the primed letters of $w$. (The operators never change whether a letter is primed or not.)

Next we consider the (nonshifted) mixed Haiman insertion of a word $w$, denote $\mathcal{P}(w)$, for a word $w$ from the alphabet $X_k'$. Given a word $w=w_1 w_2 \ldots w_h$ we recursively construct a sequence of tableaux
$\emptyset=T_0,T_1,\ldots,T_h$.
To obtain the tableau $T_s$, insert the letter $w_s$ into $T_{s-1}$ as follows.
First, if $w_s$ is unprimed, insert $w_s$ into the top row of $T_{s-1}$, bumping out the leftmost element $y$ that is strictly greater
than $w_s$. If $w_s$ is primed, insert $w_s$ into the leftmost column of $T$, bumping out the uppermost element $y$ that is strictly greater
than $w_s$.
\begin{enumerate}
	\item If $y$ is not primed, then insert it into the next row below, bumping out the leftmost
   	element that is strictly greater than $y$ from that row.
	\item If $y$ is primed, then insert it into the next column to the right, bumping out
	the uppermost element that is strictly greater than $y$ from that column.
\end{enumerate}
Continue until an element is inserted which is greater than or equal to all elements in the row/column it is inserted into, and append this element to the end of this row. We define the recording tableau for such an insertion to be the standard Young tableau that records the sequence of shapes of the $T_i$.

Note that it follows from the symmetry in the definition of Haiman insertion that $\mathcal{P}(w^+)=(\mathcal{P}(w))^+$ and $\mathcal{P}(w^-)=(\mathcal{P}(w))^-$.

\begin{theorem} \cite{Haiman.1989} \label{hm}
Haiman insertion gives a double weight preserving bijection from words from $ X_k'$  to pairs of tableaux $(T,S)$, where $T$ is a primed tableau and $S$ is a standard Young tableau of the same shape.
\end{theorem}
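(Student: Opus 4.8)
The plan is to run the standard Robinson--Schensted--Knuth argument, adapted to the mixed row/column bumping, by separately establishing three facts: that $w \mapsto (\mathcal{P}(w),S)$ is well defined (the insertion tableau is a genuine primed tableau and $S$ is standard), that it preserves double weight, and that it is invertible. I would organize the bumping analysis around the single clean invariant visible in the definition: once placed, an unprimed entry is only ever moved by row insertions and a primed entry is only ever moved by column insertions, since a bumped unprimed element is reinserted into the row below and a bumped primed element into the column to the right. Thus each insertion of a letter traces a lattice path that steps downward at unprimed entries and rightward at primed entries, terminating when the moving element is appended.

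For well definedness, I would first note that this path is strictly monotone and hence finite, so every letter contributes exactly one new box; because the shapes form a saturated chain $\emptyset = \lambda^{(0)} \subset \cdots \subset \lambda^{(h)}$ with $|\lambda^{(s)}/\lambda^{(s-1)}| = 1$, the recording tableau $S$ is automatically standard. Weak monotonicity of the rows and columns of $T$ then follows by induction from the ``bump the leftmost (resp.\ uppermost) strictly larger entry'' rule. The delicate point is condition (4) in the definition of $PST$: at most one primed $i$ per row and at most one unprimed $i$ per column. I would prove this is preserved under a single insertion by the usual bumping-lemma bookkeeping, but the work can be halved using the symmetry $\mathcal{P}(w^+) = \mathcal{P}(w)^+$ recorded just above the theorem: the transpose-and-shift operation interchanges rows with columns and primed entries with unprimed ones, carrying the column condition to the row condition, so only one of the two requires an independent argument.

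Double weight preservation is immediate: a bumped element keeps both its value and its prime when it is reinserted, so no step creates, destroys, or re-primes a letter, and the multiset of entries of $\mathcal{P}(w)$ equals the multiset of letters of $w$. Hence the counts of primed $i$s and of unprimed $i$s agree, which is exactly the assertion that $\mathcal{P}$ preserves $(dw(\cdot,1),dw(\cdot,2))$.

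For invertibility I would build the inverse by reverse insertion. Given $(T,S)$ with $S$ standard of shape $sh(T)$, I process the labels of $S$ in decreasing order; the box carrying the largest remaining label is the most recently added box, and reverse-bumping its entry reverses rules (1) and (2) step by step (an unprimed entry is pushed up into the preceding row, a primed entry left into the preceding column), the direction at each step again being forced by the prime status of the moving entry. The letter $w_s$ is the entry finally expelled, out the top row if it is unprimed and out the left column if it is primed, matching exactly the two ways a letter can enter in the forward map, so that the prime status read off the expelled entry is consistent. The two procedures visibly undo one another on each step. I expect the genuine obstacle to be the two places where the mixed nature of the insertion (absent in ordinary RSK) must be controlled: verifying that condition (4) survives a forward insertion, and verifying that a reverse bumping path is forced and always terminates on a single boundary so that the prime status of $w_s$ is recovered unambiguously; these are dual statements, and once the first is proved the transpose-and-shift symmetry should reduce the second.
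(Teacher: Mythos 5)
The paper does not prove this statement at all: Theorem~\ref{hm} is imported verbatim from \cite{Haiman.1989} (it is the non-shifted analogue of Haiman's mixed insertion theorem), so there is no in-paper argument to compare yours against. Judged on its own, your outline is the standard RSK-style strategy and your structural reductions are sound; in particular, using the formal symmetry $\mathcal{P}(w^+)=\mathcal{P}(w)^+$ to trade the column condition on unprimed letters for the row condition on primed letters is legitimate (the symmetry is a property of the algorithm itself, not of its output being a valid tableau) and is exactly the kind of use the paper makes of $(\cdot)^+$ elsewhere.

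Two cautions. First, your termination argument is stated incorrectly: the bumping route is \emph{not} a monotone lattice path. A bumped unprimed entry is reinserted one row lower but may land in a column weakly to the \emph{left}, and a bumped primed entry is reinserted one column to the right but may land in a row weakly \emph{above}, so the positions can oscillate. What is strictly monotone is the sequence of bumped \emph{values} $y_0<y_1<y_2<\cdots$, and finiteness of the tableau is what forces termination; you should argue from that instead. Second, the two items you defer --- preservation of the ``at most one primed $i$ per row / one unprimed $i$ per column'' condition under a single insertion, and the forcedness and correct boundary behaviour of the reverse bumping path --- are not bookkeeping; they are essentially the entire content of the theorem, and your proposal as written is a plan for a proof rather than a proof. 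If you intend to supply these details rather than cite Haiman, that is where all the work lies.
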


 Our next goal is to show that $f_i(\mathcal{P}(w))=\mathcal{P}(f_i(w))$ (where the lefthand side involves the operator defined in this paper and the right hand side involves the usual crystal operator on primed words restricted to the nonprimed entries).  To do this we consider Haiman insertion step by step.

Suppose $w_1 \ldots w_h$ is a primed word and that $T_i$ are the tableaux as in the definition of Haiman insertion. For each $s$ define the reading word of $(T_{s-1},w_s \cdots w_h)$ to be the reading word of $T$ concatenated with the unprimed entries of $w_s \cdots w_h$.  $f_i$ acts on $(T_{s-1},w_s \cdots w_h)$ by selecting the rightmost unbracketed $i$ in its reading word.  If this is an element of $w_s \cdots w_h$, it simply changes this $i$ to $j$.  Otherwise, $f_i$ acts according to the rules for primed tableaux specified earlier.

\begin{lemma}\label{comstep}
Let $(T_{s},w_{s+1}\cdots w_h)$ denote the result of applying one insertion step to $(T_{s-1},w_{s}\cdots w_h)$.
Then set $f_i(T_{s-1},w_{s}\cdots w_h)=(T_{s-1}^*,w_{s}^*\cdots w_h^*)$.
Finally, let $(T_{s}^*,w_{s+1}^*\cdots w_h^*)$ denote the result of applying one insertion step to $(T_{s-1}^*,w_{s}^*\cdots w_h^*)$.
Then we have $(T_{s}^*,w_{s+1}^*\cdots w_h^*)=f_i(T_{s},w_{s+1}\cdots w_h)$.
\end{lemma}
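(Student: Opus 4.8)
The plan is to prove Lemma \ref{comstep} by a local analysis of a single Haiman insertion step, verifying that it commutes with $f_i$ in each of a short list of configurations. Write $j=i+1$, and recall that $f_i$ acts on a pair $(T_{s-1},w_{s}\cdots w_h)$ by locating the rightmost unbracketed $i$ in the combined reading word (the reading word of $T_{s-1}$ followed by the unprimed tail letters), and then either rewriting that $i$ to $j$ if it lies in the tail, or else acting on $T_{s-1}$ through cases F1--F3. Both operations to be compared, namely ``apply $f_i$ then insert $w_{s}$'' and ``insert $w_{s}$ then apply $f_i$,'' disturb only a bounded region of the data, so I would organize the proof by splitting first on whether the selected $i$ lies in the tail or in the tableau, and within each branch on whether $w_{s}$ is unprimed (row insertion into the first row) or primed (column insertion into the first column).

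The tail branch is largely forced by a basic property of type $A$ bracketing: whether a given $i$ is bracketed depends only on the letters weakly to its right, since a later $j$ can match an earlier $i$ but no earlier letter can change the status of a given $i$. Because one insertion step removes $w_{s}$ from the tail and leaves $w_{s+1}\cdots w_h$ untouched, every letter at position $>s$ keeps its bracketing status. Hence if $f_i$ selects a tail letter $w_t$ with $t>s$, it selects the same $w_t$ before and after the insertion, the rewrite of $w_t$ to $j$ commutes with the insertion of the unrelated letter $w_{s}$, and the two sides agree; the same property shows that in the tableau branch the selection cannot migrate into the tail. The one interacting tail configuration is $w_{s}=i$ (unprimed) being itself the selected letter: on the starred side we insert $j$, while on the other side we first form $T_{s}$ by inserting $i$ and then apply the tableau operator $f_i$ to the cell created (or displaced) by this $i$. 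Here one checks directly that row-inserting $i$ and then applying F1, F2, or F3 to that cell reproduces exactly the result of row-inserting $j$.

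In the tableau branch $f_i$ selects a cell $x$ of $T_{s-1}$ and, in cases F2 and F3, an auxiliary cell $q$, where in F3 the rewrite is governed by a ribbon of $j'$s and $j$s whose shape is pinned down by the structural lemmas for $f_i$ (each $j$ in the ribbon sits diagonally below and to the right of an $i$, and $c(q)=j'$). When the bumping path of $w_{s}$ avoids $x$, $q$, and the F3-ribbon, the tableau rewrite and the insertion act on disjoint cells and commute as operations on the tableau; that the selected cell is preserved then follows by tracking the effect of the (disjoint) bump path on the bracketing, using the suffix property above. The substantial subcases are those in which the bump path of $w_{s}$ meets this region: it may overwrite the content of $x$ or $q$, or pass through the ribbon and thereby change which entries read as $j'$ versus $j$. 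In these subcases I would invoke the structural lemmas to show that a single Haiman bump preserves the defining incidences of the ribbon, so that the selected cell and the rewrite transform coherently.

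The main obstacle is precisely this last family: the tableau configurations in which the bumping path of $w_{s}$ threads the F3-ribbon or lands on $x$ or $q$. The difficulty is genuinely two-dimensional, because the cell selected by $f_i$ is determined by a global bracketing condition while the ribbon rewrite is a local $j'/j$ surgery, and a single bump can interact with both at once; one must verify that the bump and the ribbon rewrite may be performed in either order with the same outcome. I expect to control this by confining the interaction to the at most two rows and two columns containing $x$, $q$, $S_x$, and the cell where the bump path of $w_{s}$ enters the region, and then checking the finitely many resulting local pictures by hand. The primed case is handled by the parallel column-insertion analysis: the geometry of the bump path transposes by the symmetry built into the definition of Haiman insertion, so the positional bookkeeping carries over essentially verbatim.
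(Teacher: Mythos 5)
Your overall strategy---a local case analysis of how a single Haiman insertion step interacts with $f_i$, split on whether the selected letter lies in the tail or in the tableau, with the F3 ribbon identified as the crux---is the same as the paper's. But two steps as written would fail. First, your tail branch rests on the claim that whether a given $i$ is bracketed depends only on the letters weakly to its right, ``since a later $j$ can match an earlier $i$.'' Under the convention this paper forces (where $f_i$ acts on the \emph{rightmost} unbracketed $i$ of the bottom-to-top row reading word), each $j$ brackets an $i$ occurring \emph{later} in the reading word: check this against the reading word $2\,1\,1$ of $\young(11,2)$, whose rightmost $1$ must be the unbracketed one, or against the paper's lemma asserting that $k$ $j$'s in row $r_x+1$ bracket $k$ of the $i$'s in row $r_x$. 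So matched pairs have the form $j\cdots i$ and an $i$'s status is determined by its \emph{prefix}---which for a tail letter is exactly the part the insertion step changes. Your suffix argument therefore establishes nothing; the conclusion you want (the selection neither moves within the tail nor migrates between tail and tableau) is true, but it requires verifying that the reading word of $(T_{s-1},w_s)$ and the reading word of $T_s$ have the same numbers of unbracketed $i$'s and unbracketed $j$'s, a nontrivial statement about the bumping path that the paper isolates as the first reduction.

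Second, in the tableau branch your plan to confine the interaction to ``the at most two rows and two columns containing $x$, $q$, $S_x$, and the entry cell of the bump path'' does not match the geometry: in case F3 the positions $x$ and $q$ are joined by a ribbon of $j$'s and $j'$'s that can span arbitrarily many rows and columns, and the bump path may enter it anywhere, move a ribbon box diagonally, or shift the acting positions to $E_x$, $S_x$, $E_q$, or $S_q$. The device that makes the verification genuinely finite is absent from your proposal: restrict the tableau to the letters $i$, $j'$, $j$ only, regard the inserted letter as occupying an inner corner $z$ of the resulting skew tableau, replace the insertion step by the operation $V_z$ (delete the entry of $z$ and continue mixed insertion as if it had just been bumped), and then classify cases by which of F1, F2, F3 applies and by the row of $z$ relative to the rows of $x$ and $q$. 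Without this reduction, or an equivalent one, ``finitely many local pictures'' is not actually a finite list, so the hard cases remain uncontrolled. Your use of the transposition symmetry of mixed insertion for the primed/column direction is fine and is consistent with how the paper treats the barred operators.
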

\begin{proof}

First of all, if  $f_i$ acts on some $w_k$ for $k>s$ (or is $0$) then since (as is easily verified) the reading word of $(T_{s-1},w_s)$ and the reading word of $T_s$ have the same number of bracketed $i$s, it follows that $f_i$ acts on $(T_s,w_{s+1}\cdots w_h)$ at $w_k$ (or is $0$) as well and the result follows.  Hence we may assume $h=s$ and $f_i(T_{s-1},w_{s}\cdots w_h)\neq 0$.

Next, since the $f_i$ operator only concerns $i$s, $j'$s, and $j$s, it suffices ``ignore" entries smaller than $i$ or greater than $j$.  In other words it suffices to prove the following:  Let $T$ be a primed tableau of arbitrary skew shape composed only of $i$s, $j'$s, and $j$s.  Then if $z$ is some inner corner box of $T$, and $V_z(T)$ denotes removing the entry in box $z$ and continuing mixed Haiman insertion from this point as if the entry of $z$ had been bumped, we have the following equality: $f_i(V_z(T))=V_z(f_i(T))$, where the operator $f_i$ is applied to a skew tableau just as for a straight shape tableau.

\begin{example}
Set $T=\young(:\twor2,2\threep3,\threep)$ and let $z$ be the box containing the red entry.  To compute $V_z(T)$, we remove the red $2$, insert it in the $2^{nd}$ row, bumping a $3'$, which is inserted in the $3^{rd}$ column, bumping a $3$, which is appended to the $3^{rd}$ row. Thus $V_z(T)=\young(::2,22\threep,\threep3)$. Then $f_2$ acts on $V_z(T)=\young(::\twobl,22\threepg,\threep3)$ via F3 at the blue and green entries in boxes $x$ and $q$ to give $f_2(V_z(T))=\young(::\threep,223,\threep3)$.

On the other hand $f_2$ acts on $T=\young(:2\twobl,2\threepg3,\threep)$ via F3 at the blue and green entries in boxes $x$ and $q$ to give $f_2(T)=\young(:2\threep,233,\threep)$.  Then, $V_z(f_2(T))$ is computed by removing the red entry from $f_2(T)=\young(:\twor\threep,233,\threep)$, inserting it into the $2^{nd}$ row, bumping a $3$,  which is appended to the $3^{rd}$ row to give $V_z(f_2(T))=\young(::\threep,223,\threep3)$.
\end{example}

As before, let $x$ and $q$ denote the position of $T$ in which $f_i$ acts.  Let $r_x$, $r_q$, and $r_z$ denote the rows containing $x$, $q$, and $z$ respectively. For each possible way in which $f_i(T)$ could be formed from $T$, we explain how $f_i$ acts on $V_z(T):=T'$.  It is left to the reader to see that applying $V_z$ to $f_i(T)$ has the same result.

\begin{enumerate}
\item{Suppose $f_i$ acts on $T$ by F1 at $x$.
\begin{enumerate}
\item{If $r_z>r_x$ then $f_i$ acts on $T'$ by F1 at $x$ unless $z$ is in the column to the left of $x$ and  $c(z)=j'$.  In this case $f_i$ acts on $T'$ by F3 at $x$ and $S_x$.}
\item{If $r_z=r_x$ the row $r_x+1$ contains no $j$ (otherwise, the $i$ in $r_x$ would be bracketed).  If it also contains no $j'$, then $f_i$ acts by F1 on $T'$ at the last entry of row $r_x+1$.  Otherwise $f_i$ acts by F2 at the last two entries of row $r_x+1$.}
\item{If $r_z=r_x-1$, then $f_i$ acts on $T'$ by F1 at $x$ unless $c(z)=i$.  In this case we must have $c(E_x)=j$ (or else the $i$ in $z$ would be unbracketed), so that  $f_i$ acts on $T'$ by F1 at $E_x$.}
\item{If $r_z<r_x-1$ then $f_i$ acts on $T'$ by F1 at $x$.}

\end{enumerate}}
\item{Suppose $f_i$ acts on $T$ by F2 at $x$ and $E_x$.
\begin{enumerate}
\item{If $r_z>r_x$ then $f_i$ acts on $T'$ by F2 at $x$ and $E_x$.}

\item{If $r_z=r_x$ the row $r_x+1$ contains no $j$ (otherwise, the $i$ in $r_x$ would be bracketed).  If it also contains no $j'$, then $f_i$ acts by F1 on $T'$ at the last entry of row $r_x+1$.  Otherwise $f_i$ acts by F2 at the last two entries of row $r_x+1$.}
\item{If $r_z=r_x-1$, then $f_i$ acts on $T'$ by F2 at $x$ and $E_x$ unless $c(z)=i$.  In this case we must have a $j$ to the right of $E_x$ (or else the $i$ in $z$ would be unbracketed), so that  $f_i$ acts on $T'$ by F2 at $E_x$ and the position to the right of $E_x$.}

\item{If $r_z<r_x-1$ then $f_i$ acts on $T'$ by F2 at $x$ and $E_x$.}
\end{enumerate}}

\item{Suppose $f_i$ acts on $T$ by F3 at $x$ and $q$.
\begin{enumerate}

\item{If $r_z>r_q$ then $f_i$ acts on $T'$ by F3 at $x$ and $q$ unless $z$ is in the column to the left of $x$ and  $c(z)=j'$.  In this case $f_i$ acts on $T'$ by F3 at $x$ and $S_q$.}

\item{If $r_z=r_q$, then $q=E_z$ and $c(z)=i$. If the row $r_q+1$ contains a $j'$ in the column to the left of $q$ then  $f_i$ acts on $T'$ by F3 at $x$ and $S_q$.  Otherwise $f_i$ acts on $T'$ by F3 at $x$ and $q$.}

\item{If $r_z=r_q-1$, then $z$ is above $q$ with $c(z)=i$.  If $z \neq x$ then $f_i$ acts on $T'$ by F3 at $x$ and $E_q$.  Otherwise $f_i$ acts on $T'$ by F1 at $q$.}

\item{If $r_z<r_q-1$, and $r_z>r_x$, then $f_i$ acts by F3 at $x$ and $q$ (but one of the boxes in the ribbon defined in F3 has moved diagonally down and to the right).}
\item{If $r_z<r_q-1$, and $r_z=r_x+1$. Then $c(E_x)=j$ and $f_i$ acts on $T'$ by F3 at $E_x$ and $q$. }
\item{If $r_z<r_q-1$, and $r_z=r_x$, then $f_i$ acts by F3 at $S_x$ and $q$.}
\item{If $r_z<r_q-1$ and $r_z<r_x+1$ then $f_i$ acts on $T'$ by F3 at $x$ and $q$.}

\end{enumerate}}

\end{enumerate}
\end{proof}

It is immediate from this definition and Proposition \ref{inv} that $e_{\bar{i}}$ and $f_{\bar{i}}$ are also inverses.  Moreover, the relationship to Haiman insertion is the same as for the non-barred operators:

\begin{theorem}
$\mathcal{P}(f_{i}(w))=f_{i}(\mathcal{P}(w))$ and $\mathcal{P}(f_{\bar{i}}(w))=f_{\bar{i}}(\mathcal{P}(w))$.
\end{theorem}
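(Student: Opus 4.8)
The plan is to prove the unbarred identity $\mathcal{P}(f_i(w)) = f_i(\mathcal{P}(w))$ first, by iterating the single-step commutation recorded in Lemma~\ref{comstep}, and then to bootstrap the barred identity from it using the transpose-and-shift symmetry of Haiman insertion. The conceptual picture is that the mixed states $(T_{s-1}, w_s \cdots w_h)$ interpolate between the pure word $(\emptyset, w)$ at step $0$ and the pure tableau $(\mathcal{P}(w), \emptyset)$ at step $h$. On the former, the mixed-state $f_i$ specializes to the word operator; on the latter it specializes to the tableau operator. Since Lemma~\ref{comstep} asserts that each of the $h$ insertion steps commutes with $f_i$, the full insertion map (the composite of all $h$ steps) commutes with $f_i$ as well.

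First I would pin down the two endpoint identifications. On $(\emptyset, w)$ the reading word is exactly the word of unprimed letters of $w$, so applying $f_i$ there, namely selecting the rightmost unbracketed $i$ and changing it to $j$, is by definition the word operator; inserting the result yields $\mathcal{P}(f_i(w))$. On $(\mathcal{P}(w), \emptyset)$ there is no remaining word, so $f_i$ acts purely by the primed-tableau rules, giving $f_i(\mathcal{P}(w))$. Evaluating the commutation of the full insertion with $f_i$ at the initial state $(\emptyset, w)$ then produces $f_i(\mathcal{P}(w)) = \mathcal{P}(f_i(w))$ directly. The degenerate case $f_i(w)=0$ is handled in the same breath: since one insertion step preserves the number of bracketed $i$s in the reading word (as used inside the proof of Lemma~\ref{comstep}), the presence or absence of an unbracketed $i$ is invariant along the whole insertion, so $f_i(w)=0$ forces $f_i(\mathcal{P}(w))=0$ and conversely, under the convention $\mathcal{P}(0)=0$.

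For the barred identity I would simply chain the definitions of $f_{\bar{i}}$ with the relations $\mathcal{P}(w^+) = (\mathcal{P}(w))^+$ and $\mathcal{P}(w^-) = (\mathcal{P}(w))^-$ noted just after the definition of Haiman insertion. Concretely,
\begin{align*}
\mathcal{P}(f_{\bar{i}}(w)) &= \mathcal{P}\bigl((f_i(w^+))^-\bigr) = \bigl(\mathcal{P}(f_i(w^+))\bigr)^- = \bigl(f_i(\mathcal{P}(w^+))\bigr)^- \\
&= \bigl(f_i((\mathcal{P}(w))^+)\bigr)^- = f_{\bar{i}}(\mathcal{P}(w)),
\end{align*}
where the middle equality is the unbarred identity applied to the word $w^+$, and the first and last equalities are the definitions of $f_{\bar{i}}$ on words and on tableaux respectively.

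Because Lemma~\ref{comstep} already absorbs the entire case analysis of how $f_i$ interacts with a bump, the only remaining work is bookkeeping, and I do not anticipate a genuine obstacle. The one point that demands care is confirming that the mixed reading-word convention makes the $s=0$ and $s=h$ specializations agree on the nose with the word operator and the tableau operator, so that the telescoped commutation lands exactly on the two sides of the claimed identity; together with the formal transpose-and-shift argument for the barred operators, the theorem is then an immediate consequence.
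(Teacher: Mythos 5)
Your proposal is correct and follows essentially the same route as the paper: the unbarred identity is obtained by telescoping the single-step commutation of Lemma~\ref{comstep} across all insertion steps (the paper states this as ``immediate,'' while you helpfully spell out the endpoint identifications and the $f_i(w)=0$ case), and the barred identity is derived by exactly the same chain of equalities through $\mathcal{P}(w^+)=(\mathcal{P}(w))^+$. No gaps.
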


\begin{proof}
The first equality is immediate from Theorem \ref{comstep}.  For the second, note that:
\begin{eqnarray*}
\mathcal{P}(f_{\bar{i}}(w))=\mathcal{P}((f_i(w^+))^-)=(\mathcal{P}(f_i(w^+)))^-=(f_i(\mathcal{P}(w))^+)^-=f_{\bar{i}}(\mathcal{P}(w)).
\end{eqnarray*}

\end{proof}
From this and Theorem \ref{hm} it follows that Haiman insertion is a bicrystal isomorphism, hence proving that the crystal operators do in fact give a type $A$ bicrystal on primed tableaux.  From this we conclude:
\begin{theorem}
For any $\omega \in A_n$:
\begin{eqnarray*}
F_{\omega}^{d}(\mathbf{x},\mathbf{y})= \sum_{T \in E(\omega)} \sum_{S \in \mathcal{H}(sh(T))} s_{dw(S,1)}(\mathbf{x})s_{dw(S,2)}(\mathbf{y}),
\end{eqnarray*}
where $\mathcal{H}(sh(T))$ denotes all primed tableaux $S$ of the same shape as $T$ such that the reading word of $S$ and $S^+$ are both reverse Yamanouchi words.
\end{theorem}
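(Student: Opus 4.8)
The plan is to combine Theorem~\ref{expa} with a crystal-character computation carried out one shape at a time. By Theorem~\ref{expa} it suffices to prove that for every partition $\lambda$,
\[
R_{\lambda}(\mathbf{x},\mathbf{y}) = \sum_{S \in \mathcal{H}(\lambda)} s_{dw(S,1)}(\mathbf{x})\, s_{dw(S,2)}(\mathbf{y}),
\]
and then sum over $T \in E(\omega)$. Here I read $R_{\lambda}(\mathbf{x},\mathbf{y}) = \sum_{T \in PT_k(\lambda)} \mathbf{x}^{dw(T,1)}\mathbf{y}^{dw(T,2)}$ (as $k \to \infty$) as the bigraded character of the $A_{k-1}$ bicrystal on $PT_k(\lambda)$, where the $\mathbf{x}$-grading records the primed content $dw(\cdot,1)$ and the $\mathbf{y}$-grading records the unprimed content $dw(\cdot,2)$.

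First I would record two structural facts about this bicrystal. By the axioms \textbf{A1}--\textbf{A2}, the unbarred operators $f_i,e_i$ fix $dw(\cdot,1)$ and act on $dw(\cdot,2)$ as a type $A_{k-1}$ crystal, while the barred operators $f_{\bar{i}},e_{\bar{i}}$ fix $dw(\cdot,2)$ and act on $dw(\cdot,1)$. Moreover the two families commute: on words they are the standard crystal operators on the disjoint sets of unprimed and of primed letters, so they literally commute there, and this commutation is transported to $PT_k(\lambda)$ by the fact, just established, that Haiman insertion is a bicrystal isomorphism. For the same reason each family alone is a normal type $A$ crystal, so its character is a sum of Schur functions and (by the standard reading-word characterization, see \cite{Bump.Schilling.2017}) an element is highest weight for that family exactly when the relevant reading word is reverse Yamanouchi.

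The computation then proceeds in two stages. Decomposing $PT_k(\lambda)$ under the unbarred crystal alone and summing characters in $\mathbf{y}$ gives
\[
R_{\lambda}(\mathbf{x},\mathbf{y}) = \sum_{\nu} \Bigl(\ \sum_{\substack{S:\ e_i(S)=0\ \text{for all}\ i \\ dw(S,2)=\nu}} \mathbf{x}^{dw(S,1)} \Bigr)\, s_{\nu}(\mathbf{y}),
\]
where the inner sum ranges over the unbarred-highest-weight tableaux, i.e.\ those whose reading word is reverse Yamanouchi. Because the barred operators commute with every $e_i$, this locus of unbarred-highest-weight tableaux is itself stable under the barred crystal; since that structure is again normal type $A$ and preserves $dw(\cdot,2)=\nu$, the inner $\mathbf{x}$-sum is its own character and expands as $\sum s_{dw(S,1)}(\mathbf{x})$ over its barred-highest-weight elements $S$, namely those for which $S^+$ also has reverse Yamanouchi reading word (using $e_{\bar{i}}(S)=(e_i(S^+))^-$). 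A tableau that is highest weight for both families is precisely an element of $\mathcal{H}(\lambda)$, so reassembling the two stages yields the displayed identity for $R_\lambda$, and summing over $E(\omega)$ proves the theorem.

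The main obstacle is the passage in the second stage from the ``$\mathbf{y}$-highest-weight locus'' to a genuine Schur expansion in $\mathbf{x}$: this requires both that the two crystal structures commute, so that the $\mathbf{y}$-highest-weight set is preserved by the barred operators, and that the restriction of the barred structure to this set remains a normal type $A$ crystal whose character decomposes into Schur functions indexed by its highest weights. Both points are exactly what the bicrystal isomorphism of Haiman insertion supplies, so the argument hinges on that isomorphism rather than on any new combinatorial input; the remaining work is the routine identification of the bi-highest-weight tableaux with $\mathcal{H}(\lambda)$ by applying the reverse-Yamanouchi characterization to $S$ and to $S^+$ separately.
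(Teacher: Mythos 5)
Your argument is correct and follows essentially the same route as the paper: Theorem~\ref{expa} reduces the claim to a character computation for $R_\lambda$, and the bicrystal structure on $PT_k(\lambda)$ (transported from words via the Haiman-insertion isomorphism) yields the expansion into products of Schur functions indexed by the bi-highest-weight tableaux, which are exactly the elements of $\mathcal{H}(\lambda)$. The paper states this as an immediate consequence of the bicrystal isomorphism; your two-stage decomposition (first the unbarred crystal, then the barred crystal on the unbarred-highest-weight locus) is just the standard argument behind that consequence, written out explicitly.
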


Next we give an example of the theorem:

\begin{example}
If $\omega=121$, then 
\begin{eqnarray*}
E(\omega)=\left\{\young(12,2)\right\}
\end{eqnarray*}
\begin{eqnarray*}
\mathcal{H}(2,1)=\left\{\young(11,2),\young(\onep1,1),\young(\onep1,2),\young(\onep1,\onep),\young(\onep\twop,1),\young(\onep\twop,\onep)\right\}
\end{eqnarray*}
\begin{eqnarray*}
F_{\omega}^{d}(\mathbf{x},\mathbf{y})=  s_{21}(\mathbf{x})+s_{2}(\mathbf{x})s_1(\mathbf{y})+s_{11}(\mathbf{x})s_1(\mathbf{y})+
s_{1}(\mathbf{x})s_2(\mathbf{y})+s_{1}(\mathbf{x})s_{11}(\mathbf{y})+s_{21}(\mathbf{y})
\end{eqnarray*}
\end{example}

\section{Primed Signed Tableaux}

In this section we prove that primed tableaux and signed tableaux are in bijective correspondence.  This will then allow us to investigate the algebraic relationship between the type $A$ Stanley symmetric function and the double Stanley symmetric function (see the end of the section).

 If $T \in PST(\lambda/\mu,X,Y,j)$, then we may obtain an element of $PST(\lambda/\mu,X,Y,j-1)$ if $j>0$ by applying the following \emph{inward conversion} \footnote{A somewhat similar definition appears in  \cite{Haiman.1989} for the case where letters may not be repeated.  In fact, one can use a certain standardization process along with Haiman's mixed insertion, Haiman's conversion, and Haiman's Theorem 3.12, \cite{Haiman.1989} to derive Theorem \ref{conv} below for the specific case of $\mu=\emptyset$, i.e., for straightshape tableaux.  However, the proofs needed to do this are somewhat more complicated than those employed below, and the result, of course, less general.}  procedure to $T$ $X(j)$ times:

\begin{enumerate}
\item{Change the uppermost primed $j$ in $T$ to a barred $j$.}
\item{Repeat the following procedure until all rows and columns are weakly increasing: Switch the lowermost barred $j$ with either the entry above it or to its left, determined as follows:
\begin{itemize}
\item{If only one of the entries exists, take it.}
\item{If these entries are not equal, take the larger.}
\item{If they are equal and are unmarked, take the one above.}
\item{If they are equal and are marked, take the one on the left.}
\end{itemize}}
\end{enumerate}

It is not immediately clear that the result is in fact a $PST$--namely it seems possible that the result of the algorithm above may contain two or more barred $j$s in the same row. To see this is impossible we reason as follows: For each of the $X(j)$ conversions, let the corresponding \emph{conversion path} be the set of boxes which are altered during this conversion.  If $p_1, \ldots, p_{X(j)}$ denote these paths, then it is not difficult to check the following.

\begin{lemma}
Let $1 \leq i <X_{j}$.  If $b$ is the highest box in its column that belongs to $p_i$ then neither $b$ nor any box above $b$ in this column belongs to $p_{i+1}$.  Further, any box in $p_{i+1}$ that lies to the left of the upper leftmost box of $p_i$ also lies below it.
\end{lemma}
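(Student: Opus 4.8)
The plan is to first pin down the coarse structure of a single conversion and the order in which the primed $j$s are processed, and then to compare two consecutive conversion paths directly.

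First I would record the following structural facts about one conversion. Since the first step replaces the \emph{uppermost} primed $j$, and the second step only ever moves a barred $j$ up or to the left, the whole conversion takes place in rows weakly above the row of the primed $j$ it started from; in particular it never touches any of the still‑unconverted (hence strictly lower) primed $j$s. Because at most one primed $j$ occurs per row, the $X(j)$ primed $j$s are therefore converted in strict top‑to‑bottom order of their rows $R_1 < R_2 < \cdots$, and each path $p_i$ is a monotone lattice path (every step up or left) from its start $c_i$ in row $R_i$ to its terminal box $d_i$, which is its unique upper‑leftmost box. The barred $j$ comes to rest at $d_i$ precisely because the entries immediately above and to the left of $d_i$ are $\leq \bar{j}$. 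Since the only barred letters present are $\bar{j+1}, \ldots, \bar{k}$ (all strictly smaller than $\bar{j}$) together with the barred $j$s deposited by earlier conversions, those neighbours are smaller barred letters, an earlier $\bar{j}$, or the boundary of the shape; correspondingly, every entry displaced along $p_i$ other than the terminal $\bar{j}$ is unbarred and hence $> \bar{j}$.

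Next I would fix $i$ and compare $p_i$ with $p_{i+1}$, using that $c_{i+1}$ lies strictly below $c_i$. The statement I would isolate is a \emph{domination} claim: $p_{i+1}$ lies weakly to the southeast of $p_i$, i.e.\ it never rises above the upper boundary of $p_i$. I would prove this by tracing $p_{i+1}$ from $c_{i+1}$ and maintaining, as an invariant, that the current box of the moving $\bar{j}$ of conversion $i+1$ stays weakly below and to the right of the portion of $p_i$ in the same rows. The decisive mechanism is that the $\bar{j}$ left at $d_i$ acts as a barrier: conversion $i+1$ cannot move its barred $j$ up into $d_i$ (as $\bar{j} \not> \bar{j}$), and the smaller barred letters and the boundary lying beyond $d_i$ likewise halt any further northwest progress. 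Granting domination, part (a) is immediate, since in every column met by $p_i$ all boxes of $p_{i+1}$ then lie strictly below the top box of $p_i$; and part (b) follows because the region strictly up‑and‑to‑the‑left of $d_i$ sits above the boundary of $p_i$, which $p_{i+1}$ avoids.

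The hard part will be the local case analysis that powers the inductive step, and I expect to organize it by ruling out a first violation of the invariant. Suppose $p_{i+1}$ first breaks domination at some box; I would examine the three possibilities for the swap rule there — a vertical move, a horizontal move, and the tie between equal entries, which further splits according to whether the tied entries are marked or unmarked. In each case the local configuration of entries deposited by conversion $i$, constrained on the one hand by the weak increase of $T$ and on the other by the stopping rule described above, contradicts either the choice dictated by the swap rule or the fact that $\bar{j}$ was deposited exactly at $d_i$. Carefully matching these cases against the tie‑breaking conventions (equal‑and‑unmarked takes the box above, equal‑and‑marked takes the box on the left) is the only genuinely delicate point; everything else is bookkeeping on the monotone paths.
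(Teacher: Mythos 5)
The paper offers no proof of this lemma (it is dismissed as ``not difficult to check''), so the real question is whether your outline would close the gap. Your structural preliminaries are correct and are exactly the right ingredients: each $p_i$ is a monotone up--left lattice path ending at its unique upper-leftmost box $d_i$; the at-most-one-marked-$j$-per-row condition forces the primed $j$s to be converted in top-to-bottom order; every entry displaced along $p_i$ is unbarred (hence $>\bar{j}$); and $p_i$ terminates at $d_i$ precisely because both neighbours above and to the left of $d_i$ are $\leq \bar{j}$ or absent. Reformulating the lemma as a domination statement and ruling out a first violation is also the right frame.

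The gap is that the case analysis you defer to the end is not a technical afterthought --- it is the entire content of the lemma --- and the one mechanism you single out as ``decisive,'' the $\bar{j}$ deposited at $d_i$ acting as a barrier, does not carry the weight you assign it. It blocks only the single box $d_i$ (and, via weak increase propagating the barredness of $d_i$'s left and upper neighbours, the whole region weakly above and strictly left of $d_i$, which is in fact the clean way to get part (b)). But along the rest of the upper profile of $p_i$ the topmost box of $p_i$ in a column $c$ contains, after conversion $i$, the \emph{original entry of the box to its left} $(\rho(c),c-1)$ --- an unbarred letter --- so nothing barred stands in the way of an up-move by $p_{i+1}$ into it. What actually prevents that move is a comparison: the left neighbour of the would-be mover, at $(\rho(c)+1,c-1)$, carries an entry weakly larger (column-weak-increase in column $c-1$), and when the two are equal the condition ``each column has at most one unmarked $i$'' forces them to be marked, so the tie-breaking convention sends the path left rather than up. None of this appears in your write-up, nor does the separate argument needed when $p_{i+1}$ approaches the profile from the right in column $C_i$ (the starting column of $p_i$), where the forbidden boxes above $\rho(C_i)$ were never touched by $p_i$ and hold arbitrary original entries. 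Until those configurations are actually checked against the swap and tie-break rules --- using the $PST$ conditions in the way just indicated --- the proof is a plan rather than an argument.
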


This implies there will never be more than one barred $j$ in any row during the inward conversions.

\begin{example}
Applying inward conversion twice to $\young(\fourb\threeb14,\fourb\onep\twop,\threeb1\twop,22) $   yields $\young(\fourb\threeb\twob4,\fourb\twob\onep,\threeb11,22) $.

\end{example}

Similarly, if $T \in PST(\lambda/\mu,X,Y,j)$, then we may obtain an element of $PST(\lambda/\mu,X,Y,j+1)$ if $j<k$ by the \emph{outward conversion} procedure to $T$ $X(j+1)$ times:

\begin{enumerate}
\item{Change the lowermost barred $j$ in $T$ to a primed $j$ if it exists.}
\item{Repeat the following procedure until all rows and columns are weakly increasing: Switch the uppermost primed $j$ with either the entry below it or to its right, determined as follows:
\begin{itemize}
\item{If only one of the entries exists, take it.}
\item{If these entries are not equal, take the smaller.}
\item{If they are equal and are unmarked, take the one below.}
\item{If they are equal and are marked, take the one on the right.}
\end{itemize}}
\end{enumerate}

Analogously, we have: If $q_1, \ldots, q_{X(j)}$ denote the conversion paths, then it is not difficult to check the following.

\begin{lemma}
Let $1 \leq i <X_{j}$.  If $b$ is the lowest box in its column which belongs to $q_i$ then neither $b$ nor any box below $q$ in this column belongs to $q_{i+1}$.  Further, any box in $q_{i+1}$ that lies to the right of the lower rightmost box of $q_i$ also lies above it.
\end{lemma}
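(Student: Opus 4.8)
The plan is to obtain this lemma as the exact mirror image of the preceding lemma for inward conversion, using the fact that outward conversion is the reverse process of inward conversion. Inward conversion (sending $PST(\dots,j)$ to $PST(\dots,j-1)$) bars the uppermost primed $j$ and slides it up-and-left, whereas outward conversion (sending $PST(\dots,j-1)$ back to $PST(\dots,j)$) primes the lowermost barred $j$ and slides it down-and-right. The switching rules of the two procedures are carried into one another by the simultaneous exchanges $\mathrm{up}\leftrightarrow\mathrm{down}$, $\mathrm{left}\leftrightarrow\mathrm{right}$, and $\mathrm{larger}\leftrightarrow\mathrm{smaller}$, together with the swap of the two tie-breaking clauses (``take the entry above / on the left'' against ``take the entry below / on the right''). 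My first step is therefore to record that, under this reversal of directions, a single outward slide traces precisely the reverse of the slide of the inverse inward conversion, so that outward conversion is inverse to inward conversion box-by-box and not merely on the level of net shapes.

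Granting this, I would run the identical case analysis that establishes the inward lemma, with every occurrence of ``highest/above/left/larger/uppermost'' replaced by ``lowest/below/right/smaller/lowermost.'' Each assertion of the inward lemma then becomes its mirror: ``$b$ is the highest box in its column belonging to $p_i$'' becomes ``$b$ is the lowest box in its column belonging to $q_i$''; ``neither $b$ nor any box above it belongs to $p_{i+1}$'' becomes ``neither $b$ nor any box below it belongs to $q_{i+1}$''; and the clause about boxes to the left of the upper leftmost box of $p_i$ becomes the clause about boxes to the right of the lower rightmost box of $q_i$. Since consecutive outward paths $(q_i,q_{i+1})$ correspond to consecutive inward paths under the mirror, the non-crossing conclusion transfers verbatim, and in particular no two barred $j$'s are ever produced in the same row.

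The step I expect to be the real obstacle is justifying that the mirror symmetry of the rules is genuinely unbroken, because the alphabet $\bar{X}_k'$ is \emph{not} symmetric under order reversal: its pattern of marked and unmarked letters is not preserved, so one cannot simply rotate a tableau by $180^\circ$, relabel, and quote the inward lemma. What rescues the argument is that the tie-breaking clauses are invoked only when the sliding entry meets an entry equal to it, and such equality forces the neighbor to be another copy of the moving value --- a barred $j$ for inward conversion, a primed $j$ for outward conversion --- which is in every case marked; for all strict comparisons the switch depends only on the linear order, which reverses cleanly. I would therefore check, case by case in parallel with the inward argument, that in each configuration the outward switch reproduces the reverse of an inward switch, so that the two path families really are mirror images and the lemma follows.
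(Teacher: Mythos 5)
The paper gives no written proof of this lemma: it is introduced with ``Analogously, we have \dots it is not difficult to check,'' mirroring the equally unproved lemma for the inward conversion paths $p_i$. So your overall strategy --- re-run the (unstated) case analysis for the inward lemma with every directional word replaced by its mirror, using the fact that the outward switching rules are the formal reflections of the inward ones --- is precisely the argument the paper intends, and it is sound in outline.

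However, the step you single out as the crux is resolved by an incorrect reading of the switching rules. In both procedures the clause ``if these entries are not equal, take the larger/smaller,'' and the two tie-breaking clauses, compare the \emph{two candidate neighbors with each other}: for inward conversion the entry above and the entry to the left of the sliding barred $j$, for outward conversion the entry below and the entry to the right of the sliding primed $j$. They do not compare the sliding entry with a neighbor. Consequently your claim that ``such equality forces the neighbor to be another copy of the moving value \dots which is in every case marked'' is false: the tie case occurs when the two candidate target cells hold equal letters, and these can perfectly well be two equal \emph{unmarked} letters (e.g.\ two unmarked $1$s in an L-configuration), which is exactly why each rule needs separate clauses for ``equal and unmarked'' and ``equal and marked.'' The good news is that your conclusion survives without this claim: the two rule sets are literal mirror images under up$\leftrightarrow$down, left$\leftrightarrow$right, larger$\leftrightarrow$smaller, marked$\leftrightarrow$marked, unmarked$\leftrightarrow$unmarked, and none of the alphabet's asymmetry under order reversal enters, because you are reflecting the \emph{procedure} rather than relabelling the tableau. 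One further caution if you ever want to quote the inward lemma as a black box instead of redoing its case analysis: since outward conversion inverts inward conversion, $q_i$ is the reversal of $p_{X(j)+1-i}$, so the pair $(q_i,q_{i+1})$ corresponds to $(p_{m+1},p_m)$ with the indices in the opposite order, and the mirrored statement must be checked to coincide with (rather than merely resemble) the statement you are proving.
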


This implies there will never be more than one barred $j$ in any column during the outward conversions.

This construction leads to the major result of this section:

\begin{theorem} \label{conv}
Fix $\lambda$, $\mu$, $X$, and $Y$.  Then for  any $1 \leq j \leq k$ there is a bijection  $PST(\lambda/\mu,X,Y,j) \Rightarrow PST(\lambda/\mu,X,Y,j-1)$.
\end{theorem}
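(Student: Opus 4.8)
The plan is to prove the bijection by exhibiting the \emph{outward} conversion (applied to the value $j$) as a two-sided inverse to the \emph{inward} conversion. Inward conversion turns the $X(j)$ primed $j$s of an element of $PST(\lambda/\mu,X,Y,j)$ into barred $j$s, producing an element of $PST(\lambda/\mu,X,Y,j-1)$; outward conversion (run $X(j)$ times, since $X((j-1)+1)=X(j)$) turns the $X(j)$ barred $j$s of an element of $PST(\lambda/\mu,X,Y,j-1)$ back into primed $j$s. That each procedure lands in the asserted target set is already supplied by the two conversion-path lemmas above, which guarantee that no row (resp.\ column) acquires two barred $j$s, so that conditions (3)--(7) are maintained and only the marking of the value $j$ changes. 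Granting well-definedness, it remains only to show that inward followed by outward, and outward followed by inward, each recover the original tableau; injectivity and surjectivity then follow formally, and the two directions are dual, so I would argue one in detail.

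First I would establish \emph{local reversibility of a single slide}. A single inward slide moves one barred $j$ along a conversion path, at each step swapping it with the larger of its north and west neighbors, ties going north when unmarked and west when marked; a single outward slide moves one primed $j$ by swapping with the smaller of its south and east neighbors, ties going south when unmarked and east when marked. I would check that these rules are exactly dual. If an inward step sends the moving entry from a box $b$ into its north neighbor, displacing a value $v$ down into $b$, then in the reversed configuration the outward step compares $v$ (now directly south) against the entry $w$ directly east; row weak-increase in the undisturbed tableau gives $v\leq w$, so outward selects the south entry and slides back into $b$ -- unless $v=w$ with both marked, which would place two equal marked entries in one row. The symmetric west/east analysis produces the obstruction of two equal unmarked entries in one column. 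Both are forbidden by condition (4), so the ambiguous tie never occurs and each inward step is undone by the corresponding outward step; retracing the whole path step by step in reverse recovers the starting configuration of that single conversion.

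Next I would handle the \emph{ordering of the $X(j)$ slides}. Inward conversion processes the uppermost primed $j$ first, so by the conversion-path lemmas (which forbid a later path $p_{i+1}$ from rising above or to the left of the upper-leftmost box of $p_i$) the resulting endpoints strictly descend in row from $p_1$ to $p_{X(j)}$, and hence the lowermost barred $j$ of the fully converted tableau is the endpoint of $p_{X(j)}$. Outward conversion processes the lowermost barred $j$ first, so its first slide begins precisely at that endpoint and, by the local reversibility above and the disjointness of the (non-crossing) paths, retraces $p_{X(j)}$ in reverse back to the original site of the last-converted primed $j$ without meeting any of the already-settled barred $j$s. Removing that barred $j$ leaves the endpoint of $p_{X(j)-1}$ as the new lowermost barred $j$, and iterating unwinds $p_{X(j)-1},\ldots,p_1$ in reverse order, reproducing the initial tableau.

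I expect the ordering step to be the main obstacle. The local reversibility is a finite case check driven by condition (4), but correctly aligning the ``uppermost primed $j$'' order of inward conversion against the ``lowermost barred $j$'' order of outward conversion -- and verifying that the conversion-path lemmas genuinely prevent the paths from crossing, so that each path is retraced cleanly in the presence of the other settled barred $j$s as they are unwound in reverse -- is exactly the geometric bookkeeping those lemmas were designed to control.
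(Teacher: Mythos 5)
Your proposal is correct and follows essentially the same route as the paper, whose entire proof of Theorem \ref{conv} is the one-line assertion that inward conversion (applied $X(j)$ times) and outward conversion (applied $X(j)$ times) are mutually inverse, with well-definedness delegated to the two conversion-path lemmas. Your write-up actually supplies more than the paper does -- the local step-by-step duality of the tie-breaking rules (with condition (4) excluding the ambiguous ties) and the alignment of the ``uppermost primed $j$'' and ``lowermost barred $j$'' processing orders are exactly the details the paper leaves to the reader.
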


\begin{proof}
The bijection is given by applying inward conversion $X(j)$ times (and the inverse by applying outward conversion $X(j)$ times).
\end{proof}

\begin{definition}
Let $X, Y \in \mathbb{Z}_{\geq 0}^k$.

A  \emph{signed tableau} of shape $\lambda/\mu$ and double weight $(X,Y)$ is an element of  $PST(\lambda,\mu,X,Y,0)$.

\end{definition}

\begin{corollary} \label{tabi}
Letting $PT(\lambda/\mu)$ denote the set of all primed tableaux of shape  $\lambda/\mu$ and $ST(\lambda/\mu)$ denote the set of all signed tableaux of shape  $\lambda/\mu$, there is a double weight preserving bijection: $PT(\lambda/\mu) \Rightarrow ST(\lambda/\mu)$.
\end{corollary}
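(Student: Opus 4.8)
The plan is to deduce Corollary~\ref{tabi} from Theorem~\ref{conv} by iterating the single-step conversion bijection down the full range of the parameter $j$. Recall that a primed tableau is by definition an element of $PST(\lambda/\mu,X,Y,k)$ and a signed tableau is an element of $PST(\lambda/\mu,X,Y,0)$. Theorem~\ref{conv} supplies, for each fixed $\lambda,\mu,X,Y$ and each $1 \leq j \leq k$, a bijection $PST(\lambda/\mu,X,Y,j) \Rightarrow PST(\lambda/\mu,X,Y,j-1)$ given by applying inward conversion $X(j)$ times. First I would compose these: taking $j = k, k-1, \ldots, 1$ in turn yields a chain
\begin{equation*}
PST(\lambda/\mu,X,Y,k) \Rightarrow PST(\lambda/\mu,X,Y,k-1) \Rightarrow \cdots \Rightarrow PST(\lambda/\mu,X,Y,0),
\end{equation*}
whose composite is a bijection $PT_k(\lambda/\mu) \text{ with double weight } (X,Y) \Rightarrow ST(\lambda/\mu) \text{ with double weight } (X,Y)$. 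The crucial observation is that every map in the chain fixes the parameters $\lambda,\mu,X,Y$, changing only $j$; in particular the double weight $(X,Y)$ is preserved at each step, so the composite is double-weight preserving on each fixed-weight piece.

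Next I would pass from the fixed-weight statement to the statement about the full sets $PT(\lambda/\mu)$ and $ST(\lambda/\mu)$. Since $PT(\lambda/\mu)$ is the disjoint union over all $(X,Y) \in \mathbb{Z}_{\geq 0}^k \times \mathbb{Z}_{\geq 0}^k$ of the tableaux of double weight $(X,Y)$, and likewise for $ST(\lambda/\mu)$, I would take the disjoint union of the bijections obtained above, one for each double weight. Because each piece is matched to the piece of the same double weight, the resulting map $PT(\lambda/\mu) \Rightarrow ST(\lambda/\mu)$ is a well-defined bijection and is double-weight preserving, which is exactly the assertion of the corollary.

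There is essentially no hard mathematical content left once Theorem~\ref{conv} is in hand: the corollary is a bookkeeping consequence, namely iterating a known single-step bijection and then assembling the weight-graded pieces. The only point requiring a word of care is confirming that the parameters other than $j$ genuinely remain unchanged along the chain, so that the iteration is legitimate and the double weight is transported correctly; this is immediate from the statement of Theorem~\ref{conv}, which fixes $\lambda,\mu,X,Y$ throughout. For this reason I would present the argument tersely, simply recording that the composite of the $k$ conversion bijections, summed over all double weights, gives the desired double-weight preserving bijection between $PT(\lambda/\mu)$ and $ST(\lambda/\mu)$.
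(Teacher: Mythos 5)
Your proposal is correct and is exactly the argument the paper intends: the corollary is obtained by composing the Theorem~\ref{conv} bijections for $j=k,k-1,\ldots,1$ (each fixing $\lambda,\mu,X,Y$ and hence the double weight) and then taking the disjoint union over all double weights. The paper leaves this step implicit, so your terse write-up matches its approach.
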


Given any symmetric function $F$ we may consider its value on the doubled set of variables $(\mathbf{x},\mathbf{y})$, denoted simply $F(\mathbf{x},\mathbf{y})$.  Applying the involution on the ring of symmetric functions $\omega$ defined by condition $\omega(s_{\lambda}(\mathbf{x}))=s_{\lambda'}(\mathbf{x})$ to the function $F(\mathbf{x},\mathbf{y})$ considered as function in the variable $\mathbf{x}$ (over the ring of symmetric functions in $\mathbf{y}$) yields a symmetric function which we denote $F(\mathbf{x}/\mathbf{y})$.  Since it follows from \ref{tabi} that
\begin{eqnarray*}
R_{\lambda/\mu}(x,y):=\sum_{T \in PT(\lambda/\mu)} x^{dw(T,1)}y^{dw(T,2)}=\sum_{T \in ST(\lambda/\mu)} x^{dw(T,1)}y^{dw(T,2)},
\end{eqnarray*}
it is clear from the definition of $ST(\lambda/\mu)$ that we have $R_{\lambda/\mu}(\mathbf{x},\mathbf{y})=s_{\lambda/\mu}(\mathbf{x}/\mathbf{y})$.  From this and Theorem \ref{expa} it follows that for all $\omega \in A_n$, $F_{\omega}^d(\mathbf{x},\mathbf{y})=F_{\omega}^A(\mathbf{x}/\mathbf{y})$.  In particular this implies a relationship first noted by Lam (\cite{Lam.1995}): $F_{\omega}^C(\mathbf{x})=F_{\omega}^d(\mathbf{x},\mathbf{x})=F_{\omega}^A(\mathbf{x}/\mathbf{x})$ for $\omega \in A_n$.


\section{Conjectures for type $C$}

Recall that the concept of $RSIF_k$ was defined for any $\omega \in C_{n+1}$, not just $\omega \in A_n$.  However, we only defined $F_{\omega}^d(\textbf{x},\textbf{y})$ for elements of $A_n$.  This is because, unfortunately, generalizing  $F_{\omega}^d(\textbf{x},\textbf{y})$ in the canonical manner to $C_{n+1}$ does not result in a symmetric function.  There are some cases however, where $F_{\omega}^d(\textbf{x},\textbf{y})$ does exhibit some nice symmetry properties when we generalize to elements outside of $A_n$.  The approaches needed to investigate this symmetry are unrelated to those in the rest of the paper and are relatively technical so we will not include them in detail, but rather give a short overview of some constructions and conjectures.\\

\begin{definition}

We say an element $\omega\in C_{n+1}$ is \defn{unknotted}\footnote{The conjectures of this section are theorems if we replace unknotted with the slightly stronger concept of \emph{untangled}.  An element $\omega \in C_{n+1}$ is \defn{untangled} if for all reduced words $w$ for $\omega$:

\begin{enumerate}

\item{$s_2$ does not appear in $w$.}

\item{For $i>2$, if $s_{i}$ and $s_{i+1}$ appear in $w$, and one of  $s_{i}$ or $s_{i+1}$ appears more than once, then $s_{i-1}$ and $s_{i+2}$ do not appear in $w$.}

\end{enumerate}

However, we do not provide the proofs here.}  if the following hold for all reduced words $w$ for $\omega$:

\begin{enumerate}

\item{If the sequence $s_0s_1s_0s_1$ appears in $w$, then $s_2$ does not.}

\item{For $i>0$ if the sequence $s_is_{i+1}s_i$ appears in $w$ then $s_{i+2}$ does not.}

\end{enumerate}

\end{definition}

For instance, the permutations corresponding to the reduced words, $s_1s_2s_3s_4s_5s_4$, $s_2s_1s_0s_1s_2s_3s_2$, \\ $s_1s_0s_1s_0s_3s_4s_3$, and $s_2s_1s_3s_2s_4s_3$ are \emph{unknotted}.

\begin{definition}

A \defn{signed Edelman-Greene tableau} for an unknotted signed permutation, $\omega$, is a tableau  composed of entries from the alphabet $\{\cdots<\threeb<\twob<\oneb<0<1<2<3<\cdots\}$ such that: 

\begin{enumerate}

\item{The reading word of $T$, obtained from reading the rows of $T$ left to right, bottom to top and then changing any $i$ or -$i$ to $s_i$ is a reduced word for $\omega$.}

\item{The rows and columns of $T$ are weakly increasing.}

\item{Whenever  $T_{ij}=T_{(i+1)j}$ then we have one of:

\begin{itemize}

\item There exists $k>j$ such that $|T_{(i+1)k}|=|T_{ij}|+1$.

\item  There exists $l<j$ such that $|T_{il}|=|T_{ij}|+1$.

\item $T_{ij}=\oneb=T_{(i+1)j}$ and $T_{i(j+1)}=0=T_{(i+1)(j+1)}$.
\end{itemize} }
\end{enumerate}

\end{definition}

\begin{example}

The following is a signed Edelman-Greene tableau for the unknotted signed permutation $\{{-2},{-1},5,4,3,8,7,6\} \in C_8$: 
\begin{eqnarray*}
\young(\sevb\oneb06,\threeb\oneb06,\threeb4)
\end{eqnarray*}
since its reading word $s_3s_4s_3s_1s_0s_6s_7s_1s_0s_6$ produces the given signed permutation, and each of the four instances of repeated entries in a column is allowed by rule (3).

 \end{example}

We are now ready to state the three conjectures of this section.

\begin{conjecture}\label{con1}

Suppose $\omega \in C_{n+1}$ is unknotted,  Then we have:

\begin{eqnarray*}
F_{\omega}^d(\mathbf{x},\mathbf{x})= \sum_{\lambda} \bar{E}_{\omega}^{\lambda} s_{\lambda}(\mathbf{x})
\end{eqnarray*}
where $\bar{E}_{\omega}^{\lambda}$ is the number of signed Edelman-Greene tableaux for $\omega$ with shape $\lambda$.
\end{conjecture}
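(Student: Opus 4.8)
The plan is to prove Conjecture~\ref{con1} by constructing a type $C$ analogue of the Primed-Recording Edelman--Greene map of Theorem~\ref{main}: a total-weight-preserving bijection
\[
 RSIF_k(\omega) \;\longleftrightarrow\; \bigsqcup_{\lambda} \{\, P : P \text{ is a signed EG tableau for } \omega \text{ of shape } \lambda \,\} \times SSYT_k(\lambda).
\]
The left-hand side is the combinatorial support of $F_\omega^d(\mathbf x,\mathbf x)$: at $\mathbf y=\mathbf x$ the monomial $x^{dw(v,1)}y^{dw(v,2)}$ collapses to $\mathbf x^{dw(v,1)+dw(v,2)}$, so each $v$ is weighted by the composition recording the \emph{total} number of generators (barred or not) in each of its $k$ parts. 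The recording tableau $Q$ would add, at each insertion step, a box labeled by the index of the part currently being inserted, so that $wt(Q)$ equals this total weight and $Q$ is a plain (unprimed) semistandard tableau, the barred/unbarred distinction being irrelevant once $\mathbf y=\mathbf x$. Granting the bijection, summing over $v$ and letting $k\to\infty$ yields
\[
 F_\omega^d(\mathbf x,\mathbf x)=\sum_{\lambda}\Big(\#\{\text{signed EG tableaux of shape }\lambda\}\Big)\sum_{Q\in SSYT(\lambda)}\mathbf x^{wt(Q)}=\sum_{\lambda}\bar E_\omega^{\lambda}\,s_{\lambda}(\mathbf x),
\]
which is exactly the claim. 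Thus the entire content lies in constructing and verifying the correspondence.

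For the insertion itself I would proceed in the spirit of the mixed Haiman insertion $\mathcal P$ used before Theorem~\ref{hm}, exploiting the fact that within a single increasing part of an RSIF the barred letters $\bar n<\cdots<\bar 1$ precede the nonnegative letters $0<1<\cdots$. Concretely, barred letters are column-inserted and nonnegative letters are row-inserted, but bumping is performed \emph{à la} Edelman--Greene: when an inserted $s_a$ meets an equal entry with the appropriate neighbor, one performs a braid move in place of an ordinary bump, leaving the entry fixed and passing the duplicate along. The insertion tableau $P$ is the candidate signed EG tableau, and $Q$ records the shape growth as above.

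Two things then need to be established. First, a signed analogue of the Edelman--Greene basic fact used in the proof of Theorem~\ref{main}: inserting $v_r$ then $v_{r+1}$ with $v_r<v_{r+1}$ (signed order) produces two boxes forming a horizontal strip. This is what forces the recording of the increasing parts of an RSIF to be a genuine semistandard $Q$, and what makes the map invertible by reverse-insertion carried out in the order given by the standardization of $Q$, exactly as in Theorem~\ref{main}. Second, and this is where I expect the main obstacle to lie, one must show that the insertion tableau is always a \emph{valid} signed Edelman--Greene tableau; in particular that the only equalities $T_{ij}=T_{(i+1)j}$ that arise are those permitted by rule~(3) of the definition.

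The hard part is this second point, and it is precisely where unknottedness should enter. The repeated-column configurations of rule~(3) are the footprints of the type $C$ braid moves $s_0s_1s_0s_1=s_1s_0s_1s_0$ (order four) and $s_is_{i+1}s_i=s_{i+1}s_is_{i+1}$ (order three). I expect the two clauses in the definition of unknotted to be exactly what forbids the reduced words whose insertion would generate an illegal repeated column — a length-four $s_0s_1s_0s_1$ interacting with an $s_2$, or an $s_is_{i+1}s_i$ interacting with $s_{i+2}$ — and what guarantees that each braid move performed during insertion is forced, hence reversible. Controlling the order-four braid is the genuinely new difficulty relative to Theorem~\ref{main}, and presumably explains why the statement remains conjectural under the weaker unknotted hypothesis while becoming provable under the stronger untangled hypothesis of the footnote. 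Accordingly I would first carry out the whole scheme under the untangled hypothesis (where the problematic adjacencies are excluded outright), and only then attempt to descend to the unknotted case by a local analysis of the insertion in a neighborhood of an $s_0s_1s_0s_1$ or $s_is_{i+1}s_i$ factor.
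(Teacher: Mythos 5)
Before anything else: the statement you are addressing is stated in the paper as a \emph{conjecture}. Section 5 is explicitly described as a survey ``without proof,'' and the footnote attached to the definition of \emph{unknotted} claims only that the conjectures become theorems under the strictly stronger \emph{untangled} hypothesis, again without supplying an argument. So there is no proof in the paper to compare yours against, and your proposal has to stand on its own.

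On its own terms, your reduction is correct and is almost certainly the intended route: at $\mathbf{y}=\mathbf{x}$ the double weight collapses to the total weight, so a weight-preserving bijection from $RSIF_k(\omega)$ to pairs $(P,Q)$ with $P$ a signed Edelman--Greene tableau of shape $\lambda$ and $Q$ a semistandard tableau of shape $\lambda$ would yield the Schur expansion exactly as Theorems \ref{main} and \ref{expa} do in the $A_n$ case. But every step that makes the statement nontrivial is deferred rather than proved. You do not actually define the insertion: in particular you do not say how the sign of a bumped letter is determined as it travels through the tableau, nor verify that rows and columns stay weakly increasing in the signed order. You do not prove the signed analogue of the ``basic fact,'' which is what would make $Q$ semistandard and the map invertible by reverse insertion. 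You do not prove that the insertion tableau satisfies condition (3) of the definition of signed Edelman--Greene tableau --- that the only repeated column entries are the permitted ones --- and the role of the unknotted hypothesis in forcing this is asserted only with ``I expect'' and ``presumably.'' These are precisely the points at which the statement is open; the author evidently could not close them either, which is why the hypothesis had to be weakened from untangled to unknotted only at the level of a conjecture. There is also a structural point you do not address: for the count to come out as $\bar{E}_{\omega}^{\lambda} s_{\lambda}$, the correspondence must be surjective onto the full product set (every semistandard $Q$ must occur with every $P$ of its shape), and the reverse insertion must reconstruct a unique barring pattern on the recovered reduced word; nothing in the proposal explains how the signs are recovered. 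In short, this is a plausible research plan whose skeleton matches the paper's type $A$ machinery, but it contains no argument that would turn the conjecture into a theorem.
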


\begin{conjecture}\label{con2}
Suppose $\omega \in C_{n+1}$ is unknotted and any reduced word for $\omega$ has at most one $s_0$. Then:
\begin{eqnarray*}
F_{\omega}^{d}(\mathbf{x},-\mathbf{x})=\sum_{r \text{ even}}\sum_{\lambda} \bar{E}_{\omega}^{\lambda r} s_{\lambda}(\mathbf{x})-\sum_{r \text{ odd}} \sum_{\lambda} \bar{E}_{\omega}^{\lambda r} s_{\lambda}(\mathbf{x})
\end{eqnarray*}
where $\bar{E}_{\omega}^{\lambda r}$ is the number of signed Edelman-Greene tableaux for $\omega$ with shape $\lambda$ and exactly $r$ barred entries.
\end{conjecture}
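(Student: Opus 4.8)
The plan is to deduce the statement from the signed Edelman--Greene insertion that is expected to establish Conjecture \ref{con1}, by showing that the substitution $\mathbf{y}=-\mathbf{x}$ introduces a sign which is \emph{constant on each insertion fiber}. First I would make the substitution explicit. Writing $u(v)$ for the number of generators of nonnegative index in an $RSIF$ $v$, the defining formula $F_\omega^d(\mathbf{x},\mathbf{y})=\sum_v x^{dw(v,1)}y^{dw(v,2)}$ gives
\[
F_\omega^d(\mathbf{x},-\mathbf{x})=\sum_{v\in RSIF_k(\omega)}(-1)^{u(v)}\,x^{\,dw(v,1)+dw(v,2)} .
\]
Since every reduced word for $\omega$ has the fixed length $\ell(\omega)$, we have $u(v)=\ell(\omega)-b(v)$, where $b(v)$ counts the barred generators; hence the sign is governed, up to the global factor $(-1)^{\ell(\omega)}$, by the parity of $b(v)$.

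Next I would invoke the signed Edelman--Greene correspondence $v\mapsto(P,Q)$ behind Conjecture \ref{con1} (the type $C$ analogue of Theorem \ref{main}) to sort the factorizations into fibers indexed by the insertion tableau $P$, a signed Edelman--Greene tableau for $\omega$. Here the signs are carried by the inserted letters, so $P$ is itself signed, and the number of its barred entries, $r(P)$, equals the number of barred letters in its reading word. Read fiberwise, Conjecture \ref{con1} asserts that the \emph{unsigned} weight generating function of the fiber of $P$ is a single Schur function, $\sum_{v:\,P(v)=P}x^{\,dw(v,1)+dw(v,2)}=s_{sh(P)}(\mathbf{x})$.

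The key step, and the one where the hypotheses enter, is to prove that $b(v)$ (equivalently $u(v)$) is genuinely constant on each fiber, with common value $r(P)$. A fiber is a union over a signed Coxeter--Knuth class, whose generating moves include the braids $s_is_{i+1}s_i\leftrightarrow s_{i+1}s_is_{i+1}$ and, through $s_0$, the order-four move $s_0s_1s_0s_1\leftrightarrow s_1s_0s_1s_0$; in general these alter the number of barred letters. I expect establishing this constancy to be the main obstacle. The plan is to show that for an \emph{unknotted} $\omega$ carrying at most one $s_0$ the dangerous moves never occur: the unknotted conditions forbid $s_is_{i+1}s_i$ adjacent to $s_{i+2}$ and $s_0s_1s_0s_1$ adjacent to $s_2$, while the bound on $s_0$ removes the order-four braid outright, so the number of barred letters becomes an invariant of the signed plactic class and therefore of the fiber. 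This is exactly the input that Conjecture \ref{con2} needs beyond Conjecture \ref{con1}, and accounts for the extra hypothesis ``at most one $s_0$'' that Conjecture \ref{con1} does not impose.

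Granting the constancy, the sign $(-1)^{u(v)}=(-1)^{\ell(\omega)-r(P)}$ factors out of each fiber, so the fiberwise form of Conjecture \ref{con1} yields
\[
F_\omega^d(\mathbf{x},-\mathbf{x})=\sum_{P}(-1)^{\ell(\omega)-r(P)}s_{sh(P)}(\mathbf{x})=(-1)^{\ell(\omega)}\sum_\lambda\Big(\sum_r(-1)^r\bar E_\omega^{\lambda r}\Big)s_\lambda(\mathbf{x}),
\]
which is the asserted alternating sum over $r$ (the overall parity $(-1)^{\ell(\omega)}$, equal to $(-1)^{|\lambda|}$ on each shape that occurs, is the single point at which one must fix sign conventions; for $\ell(\omega)$ even it matches on the nose). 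I would finish by checking the identity on small instances such as $\omega=s_0$ and $\omega=s_1s_0$, which already display both the fiberwise sign mechanism and the necessity of the $s_0$ hypothesis.
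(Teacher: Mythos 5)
The paper offers no proof of this statement: it is presented as an open conjecture, and the accompanying footnote only asserts, without argument, that the conjectures of that section become theorems under the stronger ``untangled'' hypothesis. So your argument must stand on its own, and as written it does not. The skeleton is sensible --- the substitution $\mathbf{y}=-\mathbf{x}$ really does produce the sign $(-1)^{u(v)}$ with $u(v)=\ell(\omega)-b(v)$, and reducing the claim to ``the number of barred letters $b(v)$ is constant on the fibers of a signed Edelman--Greene insertion, with common value $r(P)$'' is the natural plan --- but all of the content is concentrated in steps you do not carry out. You invoke a ``signed Edelman--Greene correspondence behind Conjecture \ref{con1},'' but no such insertion exists in the paper (the Primed-Recording map of Section 2 erases the bars from the insertion tableau and stores them in $Q$, so its insertion tableaux are unsigned), and Conjecture \ref{con1} is itself unproven; you are therefore assuming not just \ref{con1} but a strictly stronger fiberwise, bijective refinement of it. Moreover, your justification for the constancy of $b(v)$ is incorrect as stated: the unknotted hypothesis does not forbid the braid moves $s_is_{i+1}s_i\leftrightarrow s_{i+1}s_is_{i+1}$ (it only forbids their coexistence with $s_{i+2}$), and such a move changes the multiset of letters and hence can change how many of them are barred --- e.g.\ $\omega$ with reduced word $s_1s_2s_1$ is unknotted and admits the move. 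Whether $b$ is a fiber invariant depends entirely on an insertion you have not constructed, so this is the genuine gap.

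The global sign $(-1)^{\ell(\omega)}$ that you set aside as a ``convention'' is also not cosmetic: your derivation yields $(-1)^{\ell(\omega)}\sum_{\lambda,r}(-1)^{r}\bar{E}_{\omega}^{\lambda r}s_{\lambda}(\mathbf{x})$, which differs from the conjectured right-hand side whenever $\ell(\omega)$ is odd. The discrepancy is already visible for $\omega=s_0$, which satisfies both hypotheses: there $F_{s_0}^{d}(\mathbf{x},\mathbf{y})=s_1(\mathbf{y})$, so $F_{s_0}^{d}(\mathbf{x},-\mathbf{x})=-s_1(\mathbf{x})$, while the unique signed Edelman--Greene tableau is the single box containing $0$, with $r=0$, so the stated formula gives $+s_1(\mathbf{x})$. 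Thus (granting your fiber-constancy claim) your version of the identity is the correct one and the conjecture as printed needs the extra factor $(-1)^{\ell(\omega)}$; the paper's examples do not detect this because they have $\ell(\omega)$ even or both sides vanishing. Either way, a proof cannot leave the sign unresolved --- the small checks on $s_0$ and $s_1s_0$ that you propose at the end would in fact have forced the issue, and should be done first rather than last.
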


\begin{conjecture}\label{con3}
Suppose $\omega \in C_{n+1}$ is unknotted and any reduced word for $\omega$ has no $s_0$ (i.e., $\omega \in A_n$). Then:
\begin{eqnarray*}
F_{\omega}^d(\mathbf{x},t\mathbf{x})=\sum_{\lambda} \bar{E}_{\omega}^{\lambda r} s_{\lambda}(\mathbf{x})t^r
\end{eqnarray*}
where $\bar{E}_{\omega}^{\lambda r}$ is the number of signed Edelman-Greene tableaux for $\omega$ with shape $\lambda$ and exactly $r$ barred entries.
\end{conjecture}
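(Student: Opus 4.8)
The plan is to reduce everything to the type $A$ theory of Sections 2--4, where $F_\omega^d$ is completely understood, and then to match the resulting $t$-graded Schur coefficients against the signed Edelman--Greene count. Since $\omega \in A_n$, Theorem \ref{expa} gives
\[
F_\omega^d(\mathbf{x},\mathbf{y}) = \sum_{\lambda} a_\omega^\lambda\, R_\lambda(\mathbf{x},\mathbf{y}), \qquad a_\omega^\lambda = |\{T \in E(\omega) : sh(T)=\lambda\}|,
\]
so after specializing $\mathbf{y} = t\mathbf{x}$ the entire left-hand side is controlled by a single specialized function $R_\lambda(\mathbf{x}, t\mathbf{x})$. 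First I would make this explicit. Splitting a signed tableau (Corollary \ref{tabi}) into its barred part, which occupies an upper-left straight shape $\mu$ and is row-strict and column-weak, and its unmarked part, which fills the skew shape $\lambda/\mu$ semistandardly, yields the Berele--Regev-type identity $R_\lambda(\mathbf{x},\mathbf{y}) = \sum_{\mu \subseteq \lambda} s_{\mu'}(\mathbf{x})\, s_{\lambda/\mu}(\mathbf{y})$ (the two boundary values $R_\lambda(\mathbf{0},\mathbf{y})=s_\lambda(\mathbf{y})$ and $R_\lambda(\mathbf{x},\mathbf{0})=s_{\lambda'}(\mathbf{x})$ fix the convention).

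Setting $\mathbf{y}=t\mathbf{x}$ and noting that each unmarked box carries one factor of $t$,
\[
R_\lambda(\mathbf{x}, t\mathbf{x}) = \sum_{\mu \subseteq \lambda} t^{\,|\lambda|-|\mu|}\, s_{\mu'}(\mathbf{x})\, s_{\lambda/\mu}(\mathbf{x}),
\]
and expanding the products by the Littlewood--Richardson rule pins down the coefficient of $s_\nu(\mathbf{x})\,t^r$ in $F_\omega^d(\mathbf{x},t\mathbf{x})$ completely in terms of the numbers $a_\omega^\lambda$ and LR coefficients. Since every $T \in E(\omega)$ has exactly $\ell(\omega)$ boxes, the exponent of $t$ equals the number of unmarked entries, equivalently the number of unbarred generators of the underlying factorization; under the conjugation built into the super-Schur function this is precisely the statistic that ought to be matched, after the relabeling $r \mapsto \ell(\omega)-r$, to the number of \emph{barred} boxes of a signed Edelman--Greene tableau. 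This paragraph is the routine, fully available half of the argument.

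The remaining and genuinely harder half is the right-hand side. Here I would construct a signed (type $C$) analog of the Primed-Recording Edelman--Greene map of Theorem \ref{main}: an insertion sending each $v \in RSIF_k(\omega)$ to a pair $(P,Q)$ with $P$ a signed Edelman--Greene tableau for $\omega$ and $Q$ a recording tableau of the same shape, arranged so that the barred letters of $v$ are recorded by the barred boxes of $P$. Proving this insertion is a weight-preserving bijection would establish Conjecture \ref{con1} (the $t=1$ case); tracking the barred statistic through the insertion would then upgrade it to the $t$-graded statement, so that the coefficient of $t^r$ recovers exactly $\sum_\nu \bar{E}_{\omega}^{\nu r} s_\nu(\mathbf{x})$. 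Matching this against the explicit type $A$ formula of the previous paragraph completes the proof.

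I expect the main obstacle to be the signed insertion itself, not the symmetric-function bookkeeping. The delicate point is condition (3) in the definition of a signed Edelman--Greene tableau, which permits controlled equalities between vertically adjacent boxes having no analog in the straight type $A$ tableaux of Section 2; showing that the insertion respects this condition, stays reduced, and transports the barred count correctly is where the real type $C$ content lives. This is also why the statement is only conjectural: the clean reduction above shows that Conjecture \ref{con3} is equivalent to a precise combinatorial identity for $\bar{E}_{\omega}^{\nu r}$, but verifying that identity appears to require the full signed Edelman--Greene correspondence whose existence and bijectivity is exactly what Conjecture \ref{con1} encodes.
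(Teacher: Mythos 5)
This statement is one of the three explicitly labelled \emph{conjectures} of Section 5: the paper offers no proof of it (only a footnote asserting that the analogous statements become theorems under the stronger ``untangled'' hypothesis, with those proofs deliberately omitted). So there is no paper proof to compare against, and your proposal, by its own admission, does not close the gap either. What you do supply on the left-hand side is correct and consistent with the paper's Section 4: by Theorem \ref{expa} and Corollary \ref{tabi} one has $R_\lambda(\mathbf{x},\mathbf{y})=s_\lambda(\mathbf{x}/\mathbf{y})=\sum_{\mu\subseteq\lambda}s_{\mu'}(\mathbf{x})\,s_{\lambda/\mu}(\mathbf{y})$, the barred part of a signed tableau being row-strict on a straight shape $\mu$ and the unmarked part semistandard on $\lambda/\mu$, so the specialization $\mathbf{y}=t\mathbf{x}$ and the check that the power of $t$ (number of unbarred generators of the factorization) must match the number of barred boxes of the signed Edelman--Greene tableau are all sound; your boundary checks at $t=0$ and the top power of $t$ agree with the paper's examples.

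The genuine gap is exactly where you locate it: the right-hand side. Reducing Conjecture \ref{con3} to ``there exists a signed Edelman--Greene insertion sending $RSIF_k(\omega)$ bijectively to pairs $(P,Q)$ with $P$ a signed Edelman--Greene tableau, transporting the barred statistic correctly'' is a clean reformulation, but that bijection is the entire content of Conjectures \ref{con1}--\ref{con3}; asserting its existence is not a proof, and nothing in Sections 2--4 (which treat only the unsigned Edelman--Greene tableaux of Section 2, with no analogue of condition (3) on repeated column entries) can be leveraged to produce it. In particular, note that the unknotted hypothesis plays no role anywhere in your argument, whereas it must enter essentially --- the paper stresses that the signed Edelman--Greene tableaux are only defined for unknotted $\omega$ and that the conjectures are expected to fail, or at least are unproven, beyond the untangled case. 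Your write-up is best read as an honest and correct reduction of the conjecture to an equivalent open combinatorial identity, not as a proof.
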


\begin{example}\label{ex1}
Let $\omega$ be the unknotted signed permutation $\{-2,-1,4,3\}$.  Then we have the following signed Edelman-Greene tableaux for $\omega$: \\
\begin{eqnarray*}
&\young(\threeb\oneb01,0) \,\,\,\,\, \young(\oneb013,0)& \\  
\\
&\young(\threeb\oneb0,\oneb0) \,\,\,\,\, \young(\oneb03,\oneb0) \,\,\,\,\, \young(\threeb01,01) \,\,\,\,\, \young(\oneb01,03) &\\
\\
& \young(\threeb01,\oneb,0) \,\,\,\,\, \young(\oneb01,0,3) \,\,\,\,\,  \young(\threeb\oneb0,0,1) \,\,\,\,\, \young(\oneb03,0,1) &  \\
\\
&\young(\threeb0,01,1) \,\,\,\,\, \young(\oneb0,\oneb0,3) \,\,\,\,\, \young(\oneb0,03,1) \,\,\,\,\, \young(\threeb\oneb,\oneb0,0) &\\
\\
&\young(\threeb0,\oneb,0,1) \,\,\,\,\, \young(\oneb0,0,1,3)& \\ 
\end{eqnarray*}
By conjecture \ref{con1}, $F_{\omega}^d(\mathbf{x},\mathbf{x})=2s_{41}(\mathbf{x})+4s_{32}(\mathbf{x})+4s_{311}(\mathbf{x})+4s_{221}(\mathbf{x})+2s_{2111}(\mathbf{x})$.
 \end{example}

\begin{example} \label{ex2}

Let $\omega$ be the unknotted signed permutation $\{3,2,-1,4\}$.  Then we have the following signed Edelman-Greene tableaux for $\omega$: \\
\begin{eqnarray*}
&\young(\twob012)& \\  
\\
&\young(012,2) \,\,\,\,\, \young(\twob12,0) \,\,\,\,\, \young(\twob\oneb2,0) &\\
\\
& \young(\oneb2,02) \,\,\,\,\, \young(\twob1,\oneb2) &  \\
\\
&\young(\twob1,\oneb,0) \,\,\,\,\, \young(\twob\oneb,\oneb,0) \,\,\,\,\, \young(\oneb2,0,2) &\\
\\
&\young(\twob,\oneb,0,2)& \\ 
\end{eqnarray*}
By conjecture \ref{con1}, $F_{\omega}^d(\mathbf{x},\mathbf{x})=s_{4}(\mathbf{x})+3s_{31}(\mathbf{x})+2s_{22}(\mathbf{x})+3s_{211}(\mathbf{x})+s_{1111}(\mathbf{x})$.  \\
By conjecture \ref{con2}, $F_{\omega}^{d}(\mathbf{x},-\mathbf{x})=-s_4(\mathbf{x})+s_{31}(\mathbf{x})-s_{211}(\mathbf{x})+s_{1111}(\mathbf{x})$.
 \end{example}

\begin{example} \label{ex3}

Let $\omega$ be the unknotted signed permutation $\{3,2,1,4\}$.  Then we have the following signed Edelman-Greene tableaux for $\omega$: \\
\begin{eqnarray*}
&\young(\twob12) \,\,\,\,\, \young(\twob\oneb2) &\\
\\
& \young(12,2) \,\,\,\,\, \young(\oneb2,2)  \,\,\,\,\, \young(\twob1,1) \,\,\,\,\, \young(\twob\oneb,1)  \,\,\,\,\, \young(\twob1,\oneb) \,\,\,\,\, \young(\twob\oneb,\oneb) &  \\
\\
&\young(\twob,1,2) \,\,\,\,\, \young(\twob,\oneb,2)& \\ 
\end{eqnarray*}
By conjecture \ref{con1}, $F_{\omega}^d(\mathbf{x},\mathbf{x})=2s_{3}(\mathbf{x})+6s_{21}(\mathbf{x})+2s_{111}$.\\
  By conjecture \ref{con2}, $F_{\omega}^{d}(\mathbf{x},-\mathbf{x})=0$.\\
By conjecture \ref{con3},
$F_{\omega}^{d}(\mathbf{x},t\mathbf{x})=s_{21}(\mathbf{x})+
ts_{3}(\mathbf{x})+2ts_{21}(\mathbf{x})+ts_{111}(\mathbf{x})+
t^2s_{3}(\mathbf{x})+2t^2s_{21}(\mathbf{x})+t^2s_{111}(\mathbf{x})+
t^3s_{21}(\mathbf{x})$.
 \end{example}

\begin{remark}

We will call a signed Edelman-Greene tableau for $\omega$ a \defn{signed unimodal tableau} for $\omega$ if changing all -$i$ to $i$ for each $i$ produces a \emph{unimodal tableau} for $\omega$ as defined in 3.1 of \cite{Hawkes.Paramonov.Schilling.2017} (after sliding rows to attain a shifted shape).

\begin{example}
The signed unimodal tableaux that appear in example \ref{ex1} are:
\begin{eqnarray*}
&\young(\threeb\oneb01,0) \,\,\,\,\, \young(\oneb013,0) \,\,\,\,\, \young(\threeb\oneb0,\oneb0)  \,\,\,\,\, \young(\threeb01,01)
\end{eqnarray*}
\end{example}

\begin{example}
The signed unimodal tableaux that appear in example \ref{ex2} are:
\begin{eqnarray*}
\young(\twob012) \,\,\,\,\,\,  \young(\twob12,0) \,\,\,\,\, \young(\twob\oneb2,0) 
\end{eqnarray*}
 \end{example}

\begin{example}
The signed unimodal tableaux that appear in example \ref{ex3} are:
\begin{eqnarray*}
\young(\twob12) \,\,\,\,\, \young(\twob\oneb2) \,\,\,\,\, \young(\twob1,1) \,\,\,\,\, \young(\twob\oneb,1)  \,\,\,\,\, \young(\twob1,\oneb) \,\,\,\,\, \young(\twob\oneb,\oneb)  
\end{eqnarray*}
 \end{example}
It is apparent from the definitions of $F_{\omega}^d$ and $F_{\omega}^C$ given in section 1 that $2^{ze(\omega)}F_{\omega}^d(\mathbf{x},\mathbf{x})=F_{\omega}^C(\mathbf{x})$ where $ze(\omega)$ is the number of $s_0$ in a reduced word for $\omega$. From this and equation (3.3) of \cite{Hawkes.Paramonov.Schilling.2017} it can be deduced that:
\begin{eqnarray*}F_{\omega}^d(\mathbf{x},\mathbf{x})=\sum\limits_{\lambda}\bar{U}_{\omega}^{\lambda}P_{\lambda}(\mathbf{x}),
\end{eqnarray*}
where $\bar{U}_{\omega}^{\lambda}$ is the number of signed unimodal tableaux for $\omega$ with shape $\lambda$.  If $\omega$ is unknotted we may apply conjecture \ref{con1} to the left hand side and expand the right hand side in terms of Schur polynomials to get:
\begin{eqnarray*}\sum_{\mu} \bar{E}_{\omega}^{\mu} s_{\mu}(\mathbf{x})=\sum_{\lambda}\sum_{\mu} \bar{U}_{\omega}^{\lambda}h_{\lambda\mu}s_{\mu}(\mathbf{x}),
\end{eqnarray*}
where $h_{\lambda\mu}$ is the multiplicity of $s_{\mu}$ in the Schur expansion of $P_{\lambda}$.
Since $h_{\lambda\lambda}=1$ and $h_{\lambda\mu}=0$ for $\mu \prec \lambda$ in dominance order, it follows that if $\mu$ is maximal such $\bar{E}_{\omega}^{\mu}>0$ then $\bar{E}_{\omega}^{\mu}=\bar{U}_{\omega}^{\mu}$. Hence, for such maximal $\mu$ the set of signed Edelman-Greene tableau for $\omega$ with shape $\mu$ must be equal to the set of signed unimodal tableau for $\omega$ with shape $\mu$.
\end{remark}

\bibliographystyle{alpha}
\bibliography{newstan}

\end{document}